\journal{European Journal of Operational Research}
\newtheorem{proposition}{Proposition}
\newtheorem{notation}{Notation}
\DeclareMathOperator*{\conv}{conv}
\newcommand{\ii}{\mathrm{i}}
\newcommand{\cA}{\mathcal{A}}
\newcommand{\cS}{\mathcal{S}}
\newcommand{\cY}{\mathcal{Y}}
\newcommand{\cN}{\mathcal{N}}
\newcommand{\cM}{\mathcal{M}}
\newcommand{\cT}{\mathcal{T}}
\newcommand{\cU}{\mathcal{U}}
\begin{document}

\begin{frontmatter}
\title{Optimization Problems Involving Matrix Multiplication \\ with Applications in Material Science and Biology
}

\author{Burak Kocuk} 
\address{Industrial Engineering Program,  Sabanc{\i} University, Istanbul, Turkey 34956\\burak.kocuk@sabanciuniv.edu}

\begin{abstract}
We consider optimization problems involving the multiplication of variable matrices to be selected from a given family, which might be a discrete set, a continuous set or a combination of both. Such nonlinear, and possibly discrete,  optimization problems arise in applications from biology and material science among others, and are known to be NP-Hard  for a special case of interest. We analyze the underlying structure of such optimization problems for two particular applications and, depending on the matrix family, obtain compact-size mixed-integer linear or quadratically constrained quadratic programming reformulations that can be solved via commercial solvers. Finally, we present the results of our computational experiments, which demonstrate the success of our approach compared to heuristic and enumeration methods predominant in the literature.
\end{abstract}

\begin{keyword}
mixed-integer linear programming \sep mixed-integer quadratically constrained quadratic programming  \sep global optimization \sep applications in biology \sep applications in material science
\end{keyword}
\end{frontmatter}

%

\section{Introduction}
\label{sec:intro}

Consider an optimization problem of the following form:
\begin{subequations} \label{eq:generic}
\begin{align}
\max_{T, w} \ & f( w ) \\
\text{s.t.} \ & p  T_1 \cdots  T_N = w  \label{eq:generic cons} \\
\ & T_1,\dots,T_N \in \cT. \label{eq:generic domain T} 
\end{align}
\end{subequations}
Here,  $p \in\mathbb{C}^{r \times d}$ is a given matrix, $f: \mathbb{C}^{r \times d} \to \mathbb{R}$ is a function  and $\cT \subseteq \mathbb{C}^{d \times d}$ is a family of matrices, which may be a discrete set, a continuous set or a combination of both. Observe that~\eqref{eq:generic} is a  nonlinear optimization problem since constraint~\eqref{eq:generic cons}  contains the multiplication of $N$ variable matrices, resulting in  degree-$N$ polynomials. Depending on the structure of the set~$\cT$, this optimization problem may also involve discrete decisions.
 Such optimization problems naturally arise in material science and biology, and  a special case in which  $f$ is a linear function and $\cT$ is a finite set  is known to be NP-Hard   \citep{tran2017antibiotics}.

The above abstract problem setting can be interpreted as follows: Suppose that there is a ``system'' initialized with the given matrix $p$. Then, the decision maker chooses a matrix $T_1$ and the system ``evolves'' to another state $p T_1$.  This process continues for $N$ transitions. Finally, the ``performance'' of the decisions $T_1,\dots,T_N$ is computed via the function $f$, whose argument is the final system state $w$.
We will now give concrete examples from material science and biology, and motivate the significance of analyzing optimization problems involving matrix multiplication. 

The first example is from material science and is called \textit{the multi-layer thin films problem}. Reflectance is an important electromagnetic property of materials and, in many optics applications, materials with high reflectance are desired. When  reflectance  of a metallic substrate is not satisfactory, dielectric coating materials can be used for   enhancement. For instance,   reflectance   of Tungsten at  450 nanometers (nm) wavelength is approximately 47\% but it can be increased to 87\% if one layer each of Titanium Dioxide and Magnesium Fluoride thin films with a quarter wavelength optical thicknesses are coated on top. Given a material library and  a budget on the number of layers, the multi-layer thin films problem seeks to find the optimal configuration of dielectric coating materials and their thicknesses to be coated in each layer so that the reflectance is maximized.  This classical problem in optics is typically solved via  heuristic and metaheuristic methods \citep{macleod2010thin, pedrotti2017introduction, tikhonravov1996application, hobson2004markov, rabady2014global, shi2017optimization, keccebacs2018enhancing}, and the rigorous treatment of the underlying optimization problem is lacking in the literature.

The second example arises from biology and is called \textit{the antibiotics time machine problem}. Antibiotic drug resistance is a serious concern in modern medical practices since the successive application of antibiotics may cause mutations, which might lead to ineffective or even harmful treatment plans. Even further complicating the problem is the inherent randomness associated with administering a certain drug.
Given a list of drugs and a predetermined length of the treatment plan, the antibiotics time machine problem seeks to find the optimal drug sequence to be applied so that the probability of reversing the mutations altogether at the end of the treatment is maximized. 
Although there is significant interest in the biology community to understand the quantitative aspect of antibiotics resistance \citep{bergstrom2004ecological, kim2014alternating, nichol2015steering, mira2015rational, mira2017statistical, yoshida2017time}, the only  method used to attack the antibiotic times machine problem appears to be complete enumeration. 

These two seemingly unrelated optimization problems can, in fact, be formulated as in \eqref{eq:generic}. In the case of   the multi-layer thin films problem, the matrix collection $\cT$ is a mixed-integer nonlinear set and the objective function $f$ is the ratio of two convex quadratic functions whereas, in the antibiotics time machine problem, $\cT$ is a finite set and $f$ is a linear function. One of our main contributions in this paper is that the generic nonlinear discrete optimization problem \eqref{eq:generic} can be reformulated as a  mixed-integer quadratically constrained quadratic program (MIQCQP) for the former problem, and a mixed-integer linear program (MILP) for the latter problem. These reformulations allow us to solve the practically relevant instances of both problems to global optimality using commercial optimization packages.

As mentioned above, literature mostly focuses on heuristics methods and complete enumeration to solve optimization problems involving matrix multiplication and the rigorous analysis of these problems from an applied operations research perspective is insufficient. One exception in this direction is \citet{wu2018optimal}, in which  the collection $\cT$ is assumed to be  a finite set. The authors provide sufficient conditions for the polynomial-time solvable cases of problem \eqref{eq:generic}, which are quite restrictive from an application point of view. In contrast, our approach in this paper is application-driven and computational, and focuses on developing methods to solve practical instances of  problem \eqref{eq:generic} arising from real-life applications.
Moreover, it is also possible to utilize our approach to attack other applications with similar structure as reported in   \citet{wu2018optimal},
  including the matrix mortality problem \citep{blondel1997pair, bournez2002mortality} and the joint spectral radius computation \citep{rota1960note,jungers2009joint}.
 

%
%
%
%


The rest of the paper is organized as follows: {In Section~\ref{sec:main}, we provide reformulations of the feasible region of problem~\eqref{eq:generic}
 depending on the structure of the set $\cT$.}
Then, we specialize these general results to two applications, multi-layer thin films from material science in Section~\ref{sec:thinFilms} and antibiotics time machine from biology in Section~\ref{sec:antibiotic}, and present the results of our extensive computational experiments.
Finally, we conclude our paper in
Section~\ref{sec:conc} with final remarks and future research directions.

\section{General Results}
\label{sec:main}

In this section, we analyze problem~\eqref{eq:generic} and propose its reformulations based on the structure of the set~$\cT$. 
In particular, we first  provide a straightforward bilinear reformulation of the polynomial constraint~\eqref{eq:generic cons} in 
Section~\ref{sec:mainBilinear}. Under the assumption that $\cT$ is a finite set, we further reformulate the feasible region of  problem~\eqref{eq:generic} as a mixed-integer linear representable set  in Section~\ref{sec:mainLinear}.



\subsection{Bilinearization}
\label{sec:mainBilinear}

Let us  define a set of matrix variables $u_n \in \mathbb{C}^{r \times d}$, $n=0,\dots,N$  that satisfy  the recursion  $u_n = u_{n-1} T_n$ for $n=1,\dots,N$ with $u_0:=p$. Then, problem \eqref{eq:generic} can be reformulated as follows:
\begin{subequations} \label{eq:genericB}
\begin{align}
\max_{T, w, u} \  &   f ( w )  \\
\textup{s.t.} \  \ &  u_{n-1} T_n =  u_{n}  \qquad  n = 1, \dots,  N \label{eq:genericB bilinear}  \\
\ & u_0 = p , u_N = w \label{eq:genericB boundary} \\
\ &  \eqref{eq:generic domain T}  \notag. 
\end{align}
\end{subequations}


We remark that provided that the matrix family $\cT$ is a bounded set,  each $u_n$ matrix is guaranteed to come from another bounded set $\cU_n\subseteq \mathbb{C}^{ r\times d}$  defined as
\begin{equation}\label{eq:define Un}
\cU_n :=  \bigcup_{k=1}^K\{ u_{n-1} \hat T_k : u_{n-1}\in \cU_{n-1}\},
\end{equation}
for $n=1,\dots,n$,  with  $\cU_0:=\{p\}$. 
This observation is quite important from the following aspect: The boundedness of the set $\cU_n$ can be utilized to construct relaxations for the bilinear constraint~\eqref{eq:genericB bilinear} in a straightforward manner (e.g. one can obtain a McCormick-based relaxation \citep{mccormick1976computability}  once variable bounds for each entry of the unknown matrices are available). Moreover, under the assumption that the matrix family $\cT$ is a finite set and a polyhedral outer-approximation of $\cU_n$ is utilized, then an \textit{equivalent} mixed-integer linear representation of the feasible region of 
problem~\eqref{eq:genericB} can be obtained, as discussed in the next section.

\subsection{Linearization}
\label{sec:mainLinear}

In this part, we will assume that the set $\cT$ is a finite set given as $\cT:=\{\hat T_k : k=1,\dots,K\}$. Under this assumption, the resulting bilinear discrete optimization problem~\eqref{eq:genericB} obtained at the end of the bilinearization step can be further reformulated such that its feasible region is mixed-integer linear representable. We now introduce Proposition \ref{prop:linearization}, which will be crucial in the sequel.

\begin{proposition}\label{prop:linearization}
Given a finite collection of matrices  $\cA =\{\hat A_k : k=1,\dots, K\} \subseteq \mathbb{C}^{d \times d}$ and a polytope $\cY \subseteq \mathbb{C}^{r \times d}$, consider the set 
\[
\cS := \left\{(y, A, z)\in \cY \times \cA \times  \mathbb{C}^{r \times d} :  y A = z \right\}.
\]
Then, the following statements hold:
\begin{enumerate}
\item
The  system \eqref{eq:ext} in variables $(y, A, z, v_k, \mu_k)$ is an extended formulation for $\cS$:
\begin{subequations}\label{eq:ext}
\begin{align}
 \sum_{k=1}^K \mu_k  \hat A_k &= A  \label{eq:ext1} \\ 
\sum_{k=1}^K v_{k} &= y  \label{eq:ext2}  \\ 
 \sum_{k=1}^K v_{k} \hat A_k  &=z  \label{eq:ext3} \\ 
\sum_{k = 1}^K \mu_{k} &=  1  \label{eq:ext4}  \\
 v_{k} \in \cY \mu_k, \ \mu_k &\in\{0,1\}, \  k=1,\dots,K.
\end{align}
\end{subequations}
\item We have
\[
\conv(\cS) =  \left\{ (y, A, z)\in \cY \times \cA \times  \mathbb{C}^{r \times d} : \ \exists ( v_{k},\mu_k) \in \cY \mu_k \times \mathbb{R}_+    : \eqref{eq:ext1}-\eqref{eq:ext4}  \right\}.
\]
\end{enumerate}
\end{proposition}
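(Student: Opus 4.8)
The plan is to establish the two parts separately: part~1 by a direct verification of the two set inclusions defining a valid projection, and part~2 by recognizing $\cS$ as a finite union of polytopes and invoking the classical disjunctive-programming characterization of the convex hull of such a union.

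For part~1, I would show that the projection of system~\eqref{eq:ext} onto the $(y,A,z)$-coordinates equals $\cS$ by checking both inclusions. For the forward inclusion, given $(y,A,z)\in\cS$ with $A=\hat A_{k^*}$ and $z=y\hat A_{k^*}$, I set $\mu_{k^*}=1$, $v_{k^*}=y$, and all remaining $\mu_k,v_k$ to zero; substitution immediately confirms \eqref{eq:ext1}--\eqref{eq:ext4} and the domain constraints, since $v_{k^*}=y\in\cY=\mu_{k^*}\cY$ and $v_k=0\in\mu_k\cY$. For the reverse inclusion, the key observation is that $\mu_k\in\{0,1\}$ together with \eqref{eq:ext4} forces exactly one index $k^*$ with $\mu_{k^*}=1$, so \eqref{eq:ext1} gives $A=\hat A_{k^*}\in\cA$. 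The membership $v_k\in\cY\mu_k$ then collapses to $v_k=0$ for every $k\neq k^*$, because the scaled polytope $0\cdot\cY$ is the singleton $\{0\}$; hence \eqref{eq:ext2} yields $y=v_{k^*}\in\cY$ and \eqref{eq:ext3} yields $z=v_{k^*}\hat A_{k^*}=yA$, so $(y,A,z)\in\cS$.

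For part~2, I would first record the structural decomposition $\cS=\bigcup_{k=1}^K\cS_k$, where $\cS_k:=\{(y,\hat A_k,y\hat A_k):y\in\cY\}$ is the image of the polytope $\cY$ under the affine map $y\mapsto(y,\hat A_k,y\hat A_k)$ and is therefore itself a nonempty, bounded polytope. This reduces the claim to computing the convex hull of a finite union of polytopes, which is precisely the setting of Balas's disjunctive characterization: for nonempty bounded polytopes $P_1,\dots,P_K$ one has $\conv(\bigcup_k P_k)=\{\sum_k x^k : x^k\in\mu_k P_k,\ \mu_k\ge 0,\ \sum_k\mu_k=1\}$. Writing a generic element of $\mu_k\cS_k$ as $\mu_k(\tilde y,\hat A_k,\tilde y\hat A_k)$ for some $\tilde y\in\cY$ and setting $v_k:=\mu_k\tilde y$, I would identify the three matrix sums with \eqref{eq:ext1}, \eqref{eq:ext2}, \eqref{eq:ext3}, the normalization with \eqref{eq:ext4}, and the membership $v_k\in\mu_k\cY$ with the stated domain of $v_k$, thereby matching the relaxed system exactly. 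Note that $A=\sum_k\mu_k\hat A_k\in\conv(\cA)$ and $y=\sum_k v_k\in\cY$ by convexity, so the ambient restrictions in the claimed expression for $\conv(\cS)$ are automatically consistent.

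I expect the main obstacle to be the careful bookkeeping of the degenerate scaling $\mu_k=0$, where $\mu_k\cY=\{0\}$ and $\mu_k\cS_k=\{0\}$, so that the boundedness hypothesis of the union-of-polytopes result is genuinely invoked and no spurious recession directions contaminate the formula. Verifying that $\cY$ being a polytope (hence bounded) makes each $\cS_k$ bounded is exactly what guarantees the clean form above without an additional conic term, and it is also what makes the collapse $v_k=0$ in part~1 legitimate rather than merely a convenient choice.
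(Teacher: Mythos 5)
Your proposal is correct and follows essentially the same route as the paper: the paper's entire proof consists of writing $\cS=\bigcup_{k=1}^K\cS_k$ with $\cS_k=\left\{(y,A,z)\in\cY\times\cA\times\mathbb{C}^{r\times d}: y\hat A_k=z,\ A=\hat A_k\right\}$ and invoking the disjunctive-programming theorem of Balas, which is exactly your part 2. The only difference is that you verify part 1 by a direct two-inclusion argument (including the correct observation that $\mu_k=0$ forces $v_k=0$ precisely because the polytope $\cY$ is bounded) rather than deducing it from the same $K$-way disjunction, which makes your write-up more self-contained than the paper's one-line citation.
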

\begin{proof}
Let us define the sets
\[
\cS_k := \left\{(y, A, z)\in \cY \times \cA \times \mathbb{C}^{r \times d} :  y \hat A_k = z, A=\hat A_k \right\},
\]
for each $k=1,\dots,K$. Clearly, we have that $\cS = \cup_{k=1}^K \cS_k$. The statements of the proposition follow by constructing a $K$-way disjunction of $\cS$ due to \citet{balas1979disjunctive}.
\end{proof}

We now apply Proposition \ref{prop:linearization} to problem  \eqref{eq:genericB} by setting $\cA=\cT$, $\hat A_k=\hat T_k$, $\cY=\bar \cU_{n-1}$, $y=u_{n-1}$, $A=T_n$ and $z=u_n$ for $n=1,\dots,N$.
After defining the copy variables $v_{n,k} \in \mathbb{C}^{r\times d}$ and binary variables $x_{n-1,k}$ which take value one if $T_n =\hat T_k$ and zero otherwise, we obtain the following problem with a mixed-integer linear representable feasible region:
\begin{subequations}\label{eq:MILP}
\begin{align}
\max_{u,v,x} \  &   f(w)   \label{eq:objMILP}\\
\textup{s.t.} \  & \sum_{k=1}^K v_{n-1,k} = u_{n-1} &n&=1,\dots,N  \label{eq:MILP1}  \\ 
 \  &  \sum_{k=1}^K v_{n-1,k} \hat T_k  =u_n   &n&=1,\dots,N   \label{eq:MILP2} \\ 
 \  & \sum_{k = 1}^K x_{n-1,k} =  1   &n&=1,\dots,N   \label{eq:MILP3}  \\
 \  &  \ v_{n-1,k} \in  \bar \cU_{n-1} x_{n-1,k}, \ x_{n-1,k} \in\{0,1\}    &n&=1,\dots,N,  k=1,\dots,K \\
 \ &  \eqref{eq:genericB boundary}. \notag
\end{align}
\end{subequations}
In this formulation, the relation $v_{n-1,k} \in \bar \cU_{n-1} x_{n-1,k}$ serves as a ``big-$M$ constraint''. Here, any polyhedral set $\bar \cU_n$ that outer-approximates the set $\cU_n$ can  be used without changing the feasible region of problem \eqref{eq:MILP}. {When applicable, we  provide such reasonable sets in the formulations of the specific applications considered in the remainder of this paper.}

\section{An Application from Material Science: Multi-Layer Thin Films}
\label{sec:thinFilms}

In this section, we  study the multi-layer thin films  problem from material science. We first introduce the basic notions in optics and provide a formal problem definition in Section~\ref{sec:thinFilmsDef}. Then,  we propose an MIQCQP formulation in Section~\ref{sec:thinFilmsForm} 
and its enhancements in Section~\ref{sec:thinFilmsFormEnhance}. 
We overview a commonly used heuristic from literature and discuss its convergence behavior  in Section~\ref{sec:thinFilmsHeur}. 
Finally, we present the results of our computational experiments in Section \ref{sec:thinFilmsComp}, which include a  discussion on the effect of formulation enhancements and a comparison of the optimal solutions with the heuristic ones.

\subsection{Problem Definition}
\label{sec:thinFilmsDef}

Suppose that we have a metallic substrate and our aim is to increase its reflectance  by coating a set of dielectric materials on top. 
Following the classical textbooks on optics~\citep{macleod2010thin, pedrotti2017introduction}, we will first introduce the basic concepts and notations in multi-layer thin films, and then present how we can attack this problem using optimization techniques.

Let us denote the  refractive   index of a metallic substrate (e.g. Tungsten, Tantalum, Molybdenum, Niobium) at wavelength $\lambda$ as $\hat a_s^\lambda  \in \mathbb{C}$, where the imaginary part is a measure of reflection losses.
Let $\cM$ be the set of dielectric coating materials, such as Silicon Dioxide (\ch{SiO2}), Titanium Dioxide  (\ch{TiO2}),  Magnesium Fluoride  (\ch{MgF2}), Aluminum Oxide  (\ch{Al2O3}). For a given wavelength~$\lambda$, we will denote the set of refractive  indices\footnote{In reality, the refractive  index of  a dielectric coating material is also a complex number. However, since the reflection loss of a dielectric material is negligibly small, we will ignore the imaginary part of this complex number.} by
\[
\cA^\lambda  := \{\hat  a_m^\lambda : m \in \cM\} \subseteq \mathbb{R}_+. 
\]

We will now introduce an important concept called the \textit{transfer matrix}, which is used to quantify the reflectance through a material.
Under the assumption that the light is at normal incidence, the transfer matrix of material $m$ of thickness $t$ at wavelength $\lambda$  is given as
\begin{equation}\label{eq:transferM}
T_{m,t}^\lambda =
 \begin{bmatrix}
\cos  \sigma_{m,t}^\lambda   &  \ii \frac{\sin \sigma_{m,t}^\lambda}{ \hat  a_m^\lambda} \\
 \ii {\hat a_m^\lambda}{\sin \sigma_{m,t}^\lambda} & \cos  \sigma_{m,t}^\lambda
\end{bmatrix}, \quad \text{ where } \sigma_{m,t}^\lambda = \frac{2\pi \hat a_m^\lambda t}{\lambda} .
\end{equation}
Here, $\ii=\sqrt{-1}$. An important fact related to transfer matrices is their ``multiplicative'' property, that is, the cumulative effect a coating material $m_1$ of thickness $t_1$ on top of a coating material $m_2$ of thickness $t_2$ is simply obtained by the product of their own transfer matrices 
$T_{m_1,t_1}^\lambda  T_{m_2,t_2}^\lambda  $ (see Figure \ref{fig:thinFilms} for an illustration). 


%
%
%
%
%
%
%
%

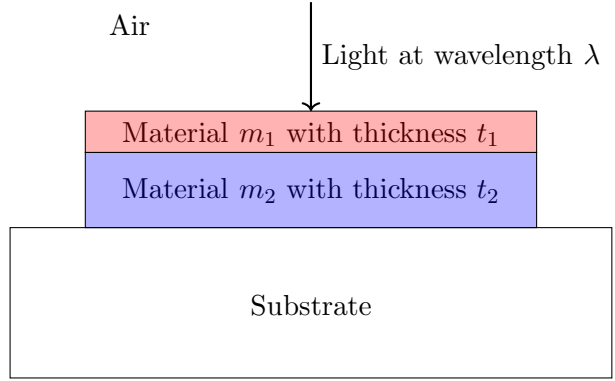
\begin{figure}[H]
\centering

\begin{tikzpicture}

\coordinate (Air) at (2,4.7);
\draw (Air)  node[left] {Air\qquad};

\coordinate (Airr) at (4,5);

\coordinate (Mat0) at (4,3.55);

\coordinate (Substrate) at (4,0.7);
\draw (Substrate)  node[above] {Substrate};

\coordinate (Mat1) at (4,2.95);
\draw (Mat1)  node[above] {Material $m_1$ with thickness $t_1$}; 

\coordinate (Mat2) at (4,2.2);
\draw (Mat2)  node[above] {Material $m_2$ with thickness $t_2$};

\draw [draw=black] (0,0) rectangle (8,2);
\filldraw [fill=blue, fill opacity=0.3, draw=black] (1,2) rectangle (7,3);
\filldraw [fill=red, fill opacity=0.3, draw=black] (1,3) rectangle (7,3.55);

\path [->] (Airr) edge[thick] node[align=center,right]{Light at wavelength $\lambda$} (Mat0);
\end{tikzpicture}
\caption{Illustration of a multi-layer thin film with $N=2$ layers.}\label{fig:thinFilms}
\end{figure}

%

We will denote the set of all transfer matrices obtainable from coating materials in $\cM$ at wavelength $\lambda$  as
\[
\cT^\lambda_+ :=
\left\{  \begin{bmatrix}
\cos  \sigma    &  \ii \frac{\sin \sigma}{a} \\
 \ii {a}{\sin \sigma} & \cos  \sigma
\end{bmatrix}: \sigma = \frac{2\pi a t}{\lambda} , 
  a \in \cA^\lambda, t \ge 0 
  \right\} . 
\] 
We note  that the set $\cT^\lambda_+$ has both discrete (selection of materials from a finite set) and continuous nature (the physical thickness $t$). Observe that the elements in $\cT^\lambda_+$ have the property that their diagonals have zero imaginary part, and their off-diagonals have zero real part, a property preserved when two elements are multiplied from this set.

\begin{notation}
Let $M \in \mathbb{C}^{2 \times 2}$ be a matrix with the property that $\Im(M_{11})=\Im(M_{22})=0$ and  $\Re(M_{12})=\Re(M_{21})=0$. Then, we will denote a  matrix $\tilde M  \in \mathbb{R}^{2 \times 2}$ by
\begin{equation*}\label{eq:tilde w def}
\tilde M_{i,j} = \begin{cases}
\Re(M_{ij}) & i=j \\
\Im(M_{ij}) & i\neq j
\end{cases},
\text{ for $i,j\in\{1,2\}$. }
\end{equation*}
\end{notation}

We will call the multiplication of transfer matrices as a ``cumulative transfer matrix''. 
For a multi-layer thin film with the cumulative transfer matrix $w\in\mathbb{C}^{2\times2}$ coated on a certain substrate, one can compute the reflectance at wavelength $\lambda$ as
\begin{equation}\label{eq:reflectance}
 R_s^\lambda (\tilde w) := 
 \frac{ (\tilde w_{11}-\Im({\hat a_s^\lambda}) \tilde w_{12}-\Re({\hat a_s^\lambda})\tilde w_{22} )^2 + (\tilde w_{21}+\Im({\hat a_s^\lambda})\tilde w_{22}-\Re({\hat a_s^\lambda) \tilde w_{12}})^2 }
 { (\tilde w_{11}-\Im({\hat a_s^\lambda})\tilde w_{12}+\Re({\hat a_s^\lambda})\tilde w_{22} )^2 + (\tilde w_{21}+\Im({\hat a_s^\lambda})\tilde w_{22}+\Re({\hat a_s^\lambda) \tilde w_{12}})^2 }.
\end{equation}
Note that $ R_s^\lambda (\tilde w)$ is the ratio of two convex quadratic functions in $\tilde w$.

{
Finally, we are ready to formally describe the multi-layer thin films problem: Given a metallic substrate, a set of coating materials $\cM$ and wavelength $\lambda$, decide  coating materials and their thicknesses  to be used in each layer of an $N$-layer thin film such that the reflectance  is maximized.  
}

\subsection{Problem Formulation}
\label{sec:thinFilmsForm}

Using the notation introduced in the previous section, we now formulate multi-layer thin films problem as an instance of the generic model~\eqref{eq:genericB}. In particular, we will set $p$ as the identity matrix, the objective function $f(w)$ as $R_s^\lambda (\tilde w)$ and the set $\cT$ as  $\cT^\lambda_+  $. In the sequel, we will reformulate the problem as an MIQCQP in Section~\ref{sec:thinFilmsFormFinal}, using the structural properties derived in 
Section~\ref{sec:thinFilmsFormPrelim}.

\subsubsection{Some Structural Properties}
\label{sec:thinFilmsFormPrelim}

We will now present some important properties of transfer matrices and the reflectance function.
\begin{proposition}\label{prop:somePropertiesTransfer}
Consider the transfer matrices as defined in~\eqref{eq:transferM}. Then,
\begin{enumerate}[(i)]
\item
$\det (T_{m,t}^\lambda ) = 1$.
\item
$T_{m,t_1}^\lambda T_{m,t_2}^\lambda = T_{m,t_1+ t_2}^\lambda$ for $t_1,t_2\ge0$.
\end{enumerate}
\end{proposition}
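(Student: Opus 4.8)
The plan is to prove both statements by direct computation, since each transfer matrix is an explicit $2\times 2$ array whose entries are elementary functions of the single scalar $\sigma_{m,t}^\lambda = 2\pi \hat a_m^\lambda t / \lambda$.

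For part (i), I would simply expand the determinant along the definition in~\eqref{eq:transferM}. Writing $\sigma := \sigma_{m,t}^\lambda$ and $a := \hat a_m^\lambda$, the determinant is
\[
\det\!\left(T_{m,t}^\lambda\right) = \cos^2\sigma - \left(\ii \tfrac{\sin\sigma}{a}\right)\left(\ii\, a \sin\sigma\right) = \cos^2\sigma - \ii^2 \sin^2\sigma = \cos^2\sigma + \sin^2\sigma = 1,
\]
where the key cancellation is that $\ii^2 = -1$ removes both the sign and the dependence on $a$, after which the Pythagorean identity closes the argument.

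For part (ii), the crucial preliminary observation is that both factors share the same material $m$, hence the same refractive index $a$. Therefore, setting $\sigma_1 := \sigma_{m,t_1}^\lambda$ and $\sigma_2 := \sigma_{m,t_2}^\lambda$, the definition of $\sigma_{m,t}^\lambda$ gives the additivity $\sigma_1 + \sigma_2 = 2\pi a (t_1+t_2)/\lambda = \sigma_{m,t_1+t_2}^\lambda$; this is exactly the angle that appears in the target matrix $T_{m,t_1+t_2}^\lambda$. I would then carry out the $2\times 2$ product entry by entry. In the diagonal entries, the cross term again produces a factor $\ii^2 = -1$, turning $\cos\sigma_1\cos\sigma_2 - \sin\sigma_1\sin\sigma_2$ into $\cos(\sigma_1+\sigma_2)$; in the off-diagonal entries the common factors $\ii/a$ and $\ii a$ factor out cleanly (the shared $a$ is what makes this cancellation work), leaving $\sin\sigma_1\cos\sigma_2 + \cos\sigma_1\sin\sigma_2 = \sin(\sigma_1+\sigma_2)$. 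Matching these four entries against~\eqref{eq:transferM} evaluated at thickness $t_1+t_2$ completes the proof.

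Neither part presents a genuine obstacle; the only point requiring care is the bookkeeping in part (ii), where one must track the $\ii$, $a$, and $1/a$ factors correctly so that the two standard angle-addition identities can be applied. The conceptual content is minimal — the result is essentially the statement that the map $t \mapsto \sigma$ is linear and that the transfer matrices, restricted to a fixed material, form a one-parameter family closed under multiplication via the cosine/sine addition formulas.
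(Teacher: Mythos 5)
Your proof is correct and follows exactly the route the paper intends: the paper's proof simply states that (i) is clear and (ii) follows from straightforward calculation with trigonometric addition formulas, and your computation fills in precisely those details (the $\ii^2=-1$ cancellation for the determinant, and the additivity $\sigma_1+\sigma_2=\sigma_{m,t_1+t_2}^\lambda$ combined with the angle-addition identities for the product). Nothing is missing.
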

\begin{proof}
Statement (i) is clear. Statement (ii) can be checked via straightforward calculation and using trigonometric addition formulas.
\end{proof}

\begin{proposition}\label{prop:reflactanceSimplification}
Consider the reflectance  function as defined  in~\eqref{eq:reflectance} and let $w$ be a cumulative transfer matrix. Then,
\begin{enumerate}[(i)]
\item
$R_s^\lambda(-\tilde w) = R_s^\lambda(\tilde w)$.
\item
$
 R_s^\lambda (\tilde w) = 1-\frac{ 4 \Re({\hat a_s^\lambda}) } { D_s^\lambda (\tilde w)  }
,
$
where
\begin{equation}\label{eq:defDen}
D_s^\lambda (\tilde w) :={  (\tilde w_{11}-\Im({\hat a_s^\lambda})\tilde w_{12})^2 +  (\Re({\hat a_s^\lambda) \tilde w_{12})^2 +
  (\tilde w_{21}+\Im({\hat a_s^\lambda})\tilde w_{22}})^2 + (\Re({\hat a_s^\lambda})\tilde w_{22})^2 + 2\Re({\hat a_s^\lambda}) }.
  \end{equation}
  \end{enumerate}
\end{proposition}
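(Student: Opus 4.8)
The plan is to treat the two parts separately: part~(i) as an immediate symmetry observation and part~(ii) as an algebraic identity whose only nontrivial ingredient is that the cumulative transfer matrix has determinant one. Throughout I would write $\alpha := \Re(\hat a_s^\lambda)$ and $\beta := \Im(\hat a_s^\lambda)$ to keep the expressions manageable.

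For part~(i), I would observe that replacing $\tilde w$ by $-\tilde w$ negates every linear form appearing inside the squared terms of both the numerator and the denominator of $R_s^\lambda$ in~\eqref{eq:reflectance}. Since each such form enters only through its square, the numerator and denominator are individually unchanged, and hence so is their ratio. No computation beyond this sign-tracking is needed.

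For part~(ii), I would first abbreviate the repeated expressions by setting $a := \tilde w_{11} - \beta \tilde w_{12}$, $b := \tilde w_{21} + \beta \tilde w_{22}$, $c := \alpha \tilde w_{22}$ and $d := \alpha \tilde w_{12}$, so that the numerator of $R_s^\lambda$ becomes $(a-c)^2 + (b-d)^2$ and the denominator becomes $(a+c)^2 + (b+d)^2$. Expanding and subtracting shows that the denominator exceeds the numerator by exactly $4(ac+bd)$, while the denominator itself equals $(a^2+b^2+c^2+d^2) + 2(ac+bd)$. A direct expansion of the cross term then gives $ac + bd = \alpha(\tilde w_{11}\tilde w_{22} + \tilde w_{12}\tilde w_{21})$, with the $\beta$-contributions cancelling.

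The crux — and the only place where something beyond routine algebra enters — is recognizing $\tilde w_{11}\tilde w_{22} + \tilde w_{12}\tilde w_{21}$ as $\det(w)$. Because $w$ has the special structure assumed in the Notation (real diagonal entries, purely imaginary off-diagonal entries), I would write $w_{12} = \ii \tilde w_{12}$ and $w_{21} = \ii \tilde w_{21}$, so that $\det(w) = \tilde w_{11}\tilde w_{22} - (\ii\tilde w_{12})(\ii\tilde w_{21}) = \tilde w_{11}\tilde w_{22} + \tilde w_{12}\tilde w_{21}$. Since $w$ is a product of transfer matrices, each of determinant one by Proposition~\ref{prop:somePropertiesTransfer}(i), multiplicativity of the determinant yields $\det(w) = 1$. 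Hence $ac + bd = \alpha$, the denominator equals $(a^2+b^2+c^2+d^2) + 2\alpha = D_s^\lambda(\tilde w)$ as in~\eqref{eq:defDen}, and the numerator equals $D_s^\lambda(\tilde w) - 4\alpha$. Dividing gives $R_s^\lambda(\tilde w) = 1 - 4\alpha / D_s^\lambda(\tilde w)$, as claimed. The only subtlety I anticipate is invoking $\det(w) = 1$ properly, since it relies on $w$ genuinely arising as a product of transfer matrices and not merely having the stated sign pattern.
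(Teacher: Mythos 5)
Your proposal is correct and follows exactly the route the paper intends: the paper's proof consists of the one-line remark that (i) is clear and (ii) follows ``via straightforward algebra and noting that $\det(\tilde w)=1$,'' and your argument is precisely that algebra carried out in full, with the determinant identity $\tilde w_{11}\tilde w_{22}+\tilde w_{12}\tilde w_{21}=\det(w)=1$ (via Proposition~\ref{prop:somePropertiesTransfer}(i) and multiplicativity) as the key step. Your closing remark is also well taken: the determinant condition genuinely uses that $w$ is a product of transfer matrices, and in fact your computation makes precise what the paper states slightly loosely, since the relevant quantity is $\det(w)$ for the complex matrix $w$ rather than $\det(\tilde w)=\tilde w_{11}\tilde w_{22}-\tilde w_{12}\tilde w_{21}$.
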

\begin{proof}
Statement (i) is clear.
Statement (ii) can be proven via straightforward algebra and noting that $\det(\tilde w) = 1$.
\end{proof}

Let us now discuss the consequences of the above properties. In particular, we claim that instead of $\cT^\lambda_+$, we can use the family of matrices defined as 
\[
\cT^\lambda :=
\left\{  \begin{bmatrix}
C  &  \ii \frac{S}{a} \\
 \ii {a}{S} & C
\end{bmatrix}: 
  a \in \cA^\lambda, C^2+S^2 = 1, (C,S)\in\mathbb{R}\times\mathbb{R}_+
  \right\}.
\]
This is due to the fact that  $\cT^\lambda_+ = -\cT^\lambda \cup \cT^\lambda$ (Proposition \ref{prop:somePropertiesTransfer}(ii)) and, from optimization point of view, $T_n \in \cT^\lambda$ and $T_n \in \cT_+^\lambda$ are equivalent (Proposition \ref{prop:reflactanceSimplification}(i)). We note that one can recover the physical thickness of a layer associated with a given transfer matrix from $\cT^\lambda$ as
\[
t =  \frac{\lambda \arccos C}{2\pi a} ,
\]
which is well-defined. 
Finally, we remark that the set $\cT^\lambda$ is bounded, which is a property that will  be exploited in the reformulation of the problem provided in the next section.


\subsubsection{Reformulation}
\label{sec:thinFilmsFormFinal}

By utilizing the structural properties derived in the previous section, we can formulate the multi-layer thin film problem as a nonlinear, discrete optimization problem as follows:
\begin{subequations} \label{eq:singleWave}
\begin{align}
\max_{T, w, u, C, S, a} \  &   D_s^\lambda (\tilde w)   \\
\textup{s.t.} 
\ &  \eqref{eq:genericB bilinear},  \eqref{eq:genericB boundary} \notag \\
\ & C_n^2 + S_n^2 = 1, S_n \ge 0 \label{eq:cos sin rel}  \\
\ & (\tilde T_n)_{11} =(\tilde T_n)_{22} = C_n   \label{eq:cos def}  \\
\ & a_n (\tilde T_n)_{12} = (\tilde T_n)_{21} / a_n = S_n  \label{eq:sin def} \\
\ & a_n \in \cA^\lambda.   \label{eq:material pick}
\end{align}
\end{subequations}
Here, constraints  \eqref{eq:cos sin rel}--\eqref{eq:material pick} guarantee that $T_n \in \cT^\lambda$. 

As a final step in the reformulation, we will give a mixed-integer linear representation of  constraints    \eqref{eq:sin def}--\eqref{eq:material pick}. To this end, let us define binary variables $x_{n,m}$, which take value one if material~$m$ is used in layer $n$ and zero otherwise. Moreover, let $v_{n,m}$ be the  auxiliary variables {needed in the disjunction arguments}, representing the quantity $S_n x_{n,m}$. Then, we obtain the following optimization problem with a convex  quadratic maximization objective and a mixed-integer bilinear representable feasible region:
\begin{subequations} \label{eq:singleWaveF}
\begin{align}
\max_{T, w,  u, C, S, v, x} \  &   D_s^\lambda (\tilde w)   \\
\textup{s.t.} 
\ &  \eqref{eq:genericB bilinear},  \eqref{eq:genericB boundary} , \eqref{eq:cos sin rel},   \eqref{eq:cos def}   \notag \\
\ & \sum_{m \in \cM} v_{n,m} =  S_n & n&=1,\dots, N  \\
\ &  \sum_{m \in \cM} v_{n,m} / \hat a_m^\lambda = (\tilde T_n)_{12} & n &=1,\dots, N  \\      
\ &  \sum_{m \in \cM} v_{n,m}  \hat a_m^\lambda = (\tilde T_n)_{21} & n&=1,\dots, N  \\
\ & \sum_{m \in \cM} x_{n,m} = 1 &n&=1,\dots, N  \label{eq:material pick binary}     \\
\ &  0 \le  v_{n,m} \le   x_{n,m},   \ x_{n,m} \in\{0,1\}  & n & =1,\dots, N,\ m\in\cM .
\end{align}
\end{subequations}
We note that the nonconvex MIQCQP~\eqref{eq:singleWaveF} can be solved via Gurobi (version 9) or global solvers such as BARON.


\subsection{Formulation Enhancements}
\label{sec:thinFilmsFormEnhance}

\subsubsection{Bound Tightening}
\label{sec:thinFilmsFormBound}

Since the success of the solution methods of global optimization problems depends on the availability of  variable bounds, we now discuss how to obtain tight variable bounds for problem~\eqref{eq:singleWaveF}. To start with, the following bounds are readily available:
\[
-1 \le C_n, (\tilde T_n)_{11}, (\tilde T_n)_{22} \le 1, 
\ 0 \le S_n \le 1, 0 \le  (\tilde T_n)_{12} \le 1/ \hat   a_L^\lambda, 0 \le  (\tilde T_n)_{21} \le 1/ \hat   a_H^\lambda
  \quad n=1,\dots,N,
\]
where
\begin{equation}\label{eq:highLowMaterials}
\hat   a_L^\lambda := \min \{ \hat a_m^\lambda: m\in\cM \} \text{ and }
\hat   a_H^\lambda := \max \{ \hat a_m^\lambda: m\in\cM \}.
\end{equation}
To obtain the variable bounds for the entries of  the cumulative transfer matrices $u_n$, we will use the following proposition. 
\begin{proposition}\label{prop:uBounds}
Let $\underline \alpha \le \overline \alpha$, $\underline \beta \le \overline \beta$, $ \Gamma := \{\gamma_h\}_{h=1}^H \in \mathbb{R}_+$, and define 
\[
\Phi := \{ (\alpha,\beta,\gamma,C,S) \in \mathbb{R}^5  : \alpha \in [\underline \alpha,  \overline \alpha] ,  \beta \in [\underline \beta,  \overline \beta], \gamma \in \Gamma, C^2 + S^2 = 1, S \ge 0\}.
\]
Then,
\begin{enumerate}[(i)]
\item
$\max\{ \alpha C + \gamma \beta S : (\alpha,\beta,\gamma,C,S) \in \Phi  \} 
= \sqrt{ \max\{\underline \alpha^2, \overline \alpha^2\} + \max\{\gamma^2: \gamma\in\Gamma\}  \max\{0, \overline \beta\}^2 }$.
\item
$\min\{ \alpha C + \gamma \beta S : (\alpha,\beta,\gamma,C,S) \in \Phi  \} 
= -\sqrt{ \max\{\underline \alpha^2, \overline \alpha^2\} + \max\{\gamma^2: \gamma\in\Gamma\}  \max\{0, -\underline \beta\}^2 }$.
\end{enumerate}
\end{proposition}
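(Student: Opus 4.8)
The plan is to optimize in two stages. First I would fix the triple $(\alpha,\beta,\gamma)$ and solve the inner problem in the arc variables $(C,S)$; then I would optimize the resulting closed form over $(\alpha,\beta,\gamma)$. Since $\Gamma$ is finite and the boxes $[\underline\alpha,\overline\alpha],[\underline\beta,\overline\beta]$ together with the semicircle $\{(C,S):C^2+S^2=1,\ S\ge0\}$ are compact, the extrema are attained and this nesting is legitimate. For fixed $(\alpha,\beta,\gamma)$ the objective $\alpha C+(\gamma\beta)S$ is linear in $(C,S)$, so the inner step amounts to optimizing a linear functional over the upper unit semicircle.

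The heart of the argument is the elementary identity
\[
\max\{\,pC+qS:\ C^2+S^2=1,\ S\ge0\,\}=\sqrt{\,p^2+\max\{0,q\}^2\,},
\]
together with its mirror image $\min\{pC+qS:\ C^2+S^2=1,\ S\ge0\}=-\sqrt{p^2+\max\{0,-q\}^2}$. I would prove these by a short case split on the sign of $q$: if the unconstrained circle optimizer $\pm(p,q)/\sqrt{p^2+q^2}$ already has $S\ge0$ (that is, $q\ge0$ for the maximum and $q\le0$ for the minimum), it is feasible and delivers $\pm\sqrt{p^2+q^2}$; otherwise the optimum is forced onto the boundary $S=0$, $C=\pm1$, giving $\pm|p|$. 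The two cases fuse into the single formula through the term $\max\{0,\pm q\}^2$. Taking $p=\alpha$ and $q=\gamma\beta$ then reduces the proposition to maximizing $\sqrt{\alpha^2+\max\{0,\gamma\beta\}^2}$ (respectively minimizing $-\sqrt{\alpha^2+\max\{0,-\gamma\beta\}^2}$) over $\alpha\in[\underline\alpha,\overline\alpha]$, $\beta\in[\underline\beta,\overline\beta]$, $\gamma\in\Gamma$.

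For the outer stage I would use separability together with the monotonicity of $t\mapsto\sqrt t$: the two nonnegative summands under the root can be maximized independently. The convex map $\alpha\mapsto\alpha^2$ is maximized over an interval at an endpoint, contributing $\max\{\underline\alpha^2,\overline\alpha^2\}$. For the second summand the decisive fact is that $\gamma\ge0$, so the sign of $\gamma\beta$ is dictated by $\beta$; hence $\max\{0,\gamma\beta\}$ is largest at $\gamma=\max\Gamma$, $\beta=\overline\beta$, producing $\max\{\gamma^2:\gamma\in\Gamma\}\max\{0,\overline\beta\}^2$, while $\max\{0,-\gamma\beta\}$ is largest at $\gamma=\max\Gamma$, $\beta=\underline\beta$, producing $\max\{\gamma^2:\gamma\in\Gamma\}\max\{0,-\underline\beta\}^2$ (using $(\max\Gamma)^2=\max\{\gamma^2:\gamma\in\Gamma\}$, valid since $\Gamma\subseteq\mathbb{R}_+$). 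Substituting these into the respective roots gives exactly (i) and (ii).

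The only genuine obstacle is the inner semicircle identity, where the constraint $S\ge0$ makes the optimal value hinge on the sign of $q=\gamma\beta$; everything after that is bookkeeping. The point requiring care is the clean passage of this case split between the two stages — it is precisely the nonnegativity of $\gamma$ that turns the inner sign-of-$q$ distinction into the outer choice of $\overline\beta$ (for the maximum) versus $\underline\beta$ (for the minimum) — and one should confirm the degenerate regimes $\overline\beta<0$ and $\underline\beta>0$, in which the $\beta$-dependent term vanishes and the formulas collapse correctly to $\pm\sqrt{\max\{\underline\alpha^2,\overline\alpha^2\}}$.
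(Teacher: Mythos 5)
Your proof is correct and follows essentially the same strategy as the paper's: a nested two-stage optimization where the inner stage optimizes the linear functional over the upper semicircle (with the case split on the sign of $\gamma\beta$ yielding $\sqrt{\alpha^2+\max\{0,\gamma\beta\}^2}$) and the outer stage pushes the box and finite-set optimization through the monotone square root. The only cosmetic difference is in part (ii): the paper reduces the minimum to part (i) via the reflection $(\alpha,\beta)\mapsto(-\alpha,-\beta)$, whereas you prove the mirrored semicircle identity and rerun the outer argument directly — both are equally valid.
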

\begin{proof}
We will prove Statement (i) in three steps. Firstly, 
consider the optimization problem $z^*(\alpha,\beta,\gamma) = \max\{ \alpha C + \gamma \beta S : C^2 + S^2 = 1, S \ge 0 \} $ for some given $(\alpha,\beta,\gamma)\in\mathbb{R}^2\times\mathbb{R}_+$. Then, $z^*(\alpha,\beta,\gamma) $ is equal to $\sqrt{\alpha^2 + (\gamma\beta)^2}$ if $\beta \ge 0$ and $|\alpha|$ otherwise. 
Secondly, consider $z^*(\gamma) := \max\{ \sqrt{\alpha^2 + \max\{0, \gamma\beta\}^2 } : \alpha \in [\underline \alpha,  \overline \alpha] ,  \beta \in [\underline \beta,  \overline \beta] \}$ for some $\gamma\in\mathbb{R}_+$, where the objective function comes from the first step. Then,  
$z^*(\gamma) = \sqrt{  \max\{\underline \alpha^2, \overline \alpha^2\} + \max\{0, \gamma\overline\beta\}^2 }$. Finally, we observe that 
$\max\{ \alpha C + \gamma \beta S : (\alpha,\beta,\gamma,C,S) \in \Phi  \} = \max\{ z^*(\gamma) : \gamma \in \Gamma  \}  $, from which the result follows.

Statement (ii) follows by noting that  $\min\{ \alpha C + \gamma \beta S : (\alpha,\beta,\gamma,C,S) \in \Phi  \} 
= - \max\{ \alpha' C + \gamma \beta' S :  (\alpha',\beta',\gamma,C,S) \in \Phi' \}$ with
$
\Phi' := \{ (\alpha',\beta',\gamma,C,S) \in \mathbb{R}^5  : \alpha' \in [-\overline \alpha,  -\underline \alpha] ,  \beta' \in [-\overline \beta,  -\underline \beta], \gamma \in \Gamma, C^2 + S^2 = 1, S \ge 0\}$, and then applying the previous result.
\end{proof}
Let us now demonstrate how Proposition \ref{prop:uBounds} can be used to derive bounds for the entries of the matrix $u_n$ with an example. Consider one of the constraints of equation \eqref{eq:genericB bilinear}
\[
(\tilde u_n)_{21} =  (\tilde u_{n-1})_{21} C_n +   a_n (\tilde u_{n-1})_{22} S_n ,
\]
for some $n=1,\dots,N$. Since we will proceed recursively and $\tilde u_0 $ is the identity matrix, let us assume that the variable bounds of $(\tilde u_{n-1})_{21}$ and $(\tilde u_{n-1})_{22}$ are available, and denoted as $ [\underline \alpha,  \overline \alpha]$ and $[\underline \beta,  \overline \beta]$, respectively. Also, let $\Gamma = \{a_m^\lambda: m \in \cM\}$. Then, Proposition \ref{prop:uBounds} gives upper and lower bounds for variable $(\tilde u_n)_{21}$. Similar arguments can be used to derive variable bounds for $(\tilde u_n)_{11}$, $(\tilde u_n)_{12}$ and $(\tilde u_n)_{22}$ as well.

%

\subsubsection{Valid Bilinear Equalities}
\label{sec:thinFilmsFormQuadEq}

 Proposition \ref{prop:somePropertiesTransfer}(i) states that the determinant of the transfer matrices is 1, a property preserved by multiplication. Therefore, all the cumulative transfer matrices have determinant 1 as well. In particular, we can add the following bilinear equality to our formulation:
\begin{equation}\label{eq:validQuad}
\det( w) = \tilde w_{1,1} \tilde w_{2,2} + \tilde w_{2,1} \tilde w_{2,1}  = 1.
\end{equation}
Note that, in principle, similar bilinear constraints corresponding to $\det(u_n)=1$ for each $n=1,\dots,N-1$ can be included as well. However, our preliminary experiments have shown that including many such bilinear constraints slows down the solvers. 

\subsubsection{Symmetry Breaking Constraints}
\label{sec:thinFilmsFormSymmBreak}
 Proposition \ref{prop:somePropertiesTransfer}(ii) implies that coating two consecutive layers of the same material with thickness $t_1$ and $t_2$ is equivalent to a single layer of the same materials with thickness $t_1+t_2$. Therefore, including the following inequality, which forbids feasible solutions in which two consequent layers of the same material are used, does not change the optimal value of problem \eqref{eq:singleWaveF}:
 \begin{equation}\label{eq:symmBreak}
x_{n,m} + x_{n+1,m} \le 1 \quad n=1,\dots,N-1, \ m\in\cM.
\end{equation}
However, the above inequality breaks the symmetry in the formulation, hence, is useful in the solution procedure.

\subsection{A Heuristic Approach}
\label{sec:thinFilmsHeur}

A common heuristic approach in the thin films literature to solve problem~\eqref{eq:singleWave} is to use alternating layers of high and low index materials with an optical thickness of a quarter wavelength (see e.g. \citet{macleod2010thin, pedrotti2017introduction}, among others). More precisely, for a given wavelength~$\lambda$, let us denote the refractive  indices of materials among  the set $\cM$ with the highest and lowest values $\hat   a_H^\lambda $ and $\hat   a_L^\lambda $ as computed in \eqref{eq:highLowMaterials}, and
and choose the physical thicknesses as
\[
t_H^\lambda := \frac{\lambda}{4   \hat a_H^\lambda}  \text{ and }
t_L^\lambda :=  \frac{\lambda}{4   \hat a_L^\lambda} .
\]
Consider a feasible solution to  problem \eqref{eq:singleWave} constructed as follows: For each odd (resp. even) index~$n$, we use high (resp. low) index material $H$ (resp. $L$) with thickness $t_H^\lambda$ (resp. $t_L^\lambda)$, that is, 
 the transfer matrix of each layer is chosen as
\begin{equation}\label{eq:thinFilmHeur}
T_n =  T_{H, t_H^\lambda}^\lambda = 
\begin{bmatrix}
0 & \frac\ii{\hat a_H^\lambda} \\
\ii {\hat a_H^\lambda} & 0 
\end{bmatrix} \text{ if $n$ is odd and }
T_n =  T_{L, t_L^\lambda}^\lambda = 
\begin{bmatrix}
0 & \frac\ii{\hat a_L^\lambda} \\
\ii {\hat a_L^\lambda} & 0 
\end{bmatrix} \text{ if $n$ is even}.
\end{equation}
We will now prove that the reflectance of the multi-layer thin films obtained as above converges to~1 as $N\to\infty$. 

\begin{proposition}\label{prop:thinFilmHeur}
Let $\lambda$ be given and consider a feasible solution to problem~\eqref{eq:singleWave} constructed in~\eqref{eq:thinFilmHeur}. Then, 
\[
\lim_{N \to \infty} R_s^\lambda (\tilde u_N) = 1,
\]
where $u_N$ is the corresponding cumulative transfer matrix of an $N$-layer thin films.
\end{proposition}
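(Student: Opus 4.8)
The plan is to exploit the fact that at a quarter-wavelength thickness the transfer matrices in~\eqref{eq:thinFilmHeur} are purely off-diagonal, so that the product of one high-index layer followed by one low-index layer collapses to a \emph{diagonal} matrix whose entries grow and decay geometrically. First I would carry out the direct multiplication
\[
T_{H,t_H^\lambda}^\lambda\, T_{L,t_L^\lambda}^\lambda
= \begin{bmatrix} -\hat a_L^\lambda/\hat a_H^\lambda & 0 \\ 0 & -\hat a_H^\lambda/\hat a_L^\lambda \end{bmatrix},
\]
and, writing $\rho := \hat a_H^\lambda/\hat a_L^\lambda$, record that this equals $-\mathrm{diag}(\rho^{-1},\rho)$. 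Here one needs the (mild) assumption that the library contains materials with distinct indices so that $\rho>1$ strictly; otherwise the pair multiplies to $\pm I$ and no enhancement occurs.

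Next I would form the cumulative matrix $u_N$ by powering this $2\times2$ block, splitting into the even and odd cases. For $N=2k$ one gets $u_N=(-1)^k\,\mathrm{diag}(\rho^{-k},\rho^k)$, and for $N=2k+1$ a further right-multiplication by $T_{H,t_H^\lambda}^\lambda$ yields a purely off-diagonal matrix with nonzero entries $\ii\,\rho^{-k}/\hat a_H^\lambda$ and $\ii\,\rho^{k}\hat a_H^\lambda$, up to the scalar $(-1)^k$. The overall $\pm$ sign can be discarded immediately via Proposition~\ref{prop:reflactanceSimplification}(i), so $R_s^\lambda(\tilde u_N)$ depends only on the magnitudes of these entries.

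I would then invoke the simplified reflectance expression of Proposition~\ref{prop:reflactanceSimplification}(ii): since $R_s^\lambda(\tilde u_N)=1-4\Re(\hat a_s^\lambda)/D_s^\lambda(\tilde u_N)$ and $\Re(\hat a_s^\lambda)$ is a fixed constant, it suffices to show $D_s^\lambda(\tilde u_N)\to\infty$. Reading off the entries of $\tilde u_N$ (real part on the diagonal, imaginary part off the diagonal) and substituting into~\eqref{eq:defDen}, the even case produces a dominant term $|\hat a_s^\lambda|^2\rho^{2k}$ from the $\tilde u_{22}$ contributions, while the odd case produces $(\hat a_H^\lambda)^2\rho^{2k}$ from the $\tilde u_{21}$ contribution. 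Because $\rho>1$, in both cases $D_s^\lambda(\tilde u_N)\to\infty$ as $k\to\infty$, i.e.\ as $N\to\infty$, and therefore $R_s^\lambda(\tilde u_N)\to1$.

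The computations are entirely elementary, so the main obstacle is not difficulty but bookkeeping: the only real content is the observation that the quarter-wavelength high/low pair multiplies to a diagonal matrix with reciprocal entries, so that the factor $\rho^{2k}$ accumulates geometrically in the denominator. The care required is in the even/odd case split and in correctly translating the complex entries of $u_N$ into the real quantities $\tilde u_N$ before plugging into $D_s^\lambda$; once that is done, the divergence of $D_s^\lambda$ and hence the convergence of the reflectance to $1$ are immediate.
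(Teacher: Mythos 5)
Your proof is correct, and it follows the same template as the paper's argument --- compute $u_N$ in closed form, show $D_s^\lambda(\tilde u_N)\to\infty$, and conclude via Proposition~\ref{prop:reflactanceSimplification}(ii) --- but your matrix algebra differs from the paper's, and yours is the right one. Direct multiplication gives $T^\lambda_{H,t_H^\lambda}T^\lambda_{L,t_L^\lambda} = -\mathrm{diag}\left(\hat a_L^\lambda/\hat a_H^\lambda,\ \hat a_H^\lambda/\hat a_L^\lambda\right)$, so the quantity that accumulates geometrically is the \emph{ratio} $\rho=\hat a_H^\lambda/\hat a_L^\lambda$, exactly as you say. The paper instead reports entries built from the \emph{product} $(\hat a_H^\lambda)^{\lceil N/2\rceil}(\hat a_L^\lambda)^{\lfloor N/2\rfloor}$; this is an algebra slip (already for $N=2$ the paper's formula would force $\hat a_L^\lambda=1$), and its divergence argument consequently rests on the conditions $\hat a_H^\lambda>1$ and $\hat a_L^\lambda>1$. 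With the correct entries, divergence of $D_s^\lambda$ requires $\rho>1$, i.e., that the material library contains two materials with distinct indices at wavelength $\lambda$ --- precisely the hypothesis you flag. Your version also matches the optics of quarter-wave stacks: if $\hat a_H^\lambda=\hat a_L^\lambda$, the $H/L$ pair multiplies to $-I$, so $u_{2k}=\pm I$ and the reflectance stays at the bare-substrate value, meaning the proposition genuinely fails without that assumption. In short: same strategy as the paper, but your computation repairs an error in the paper's own proof and makes explicit the mild but necessary condition under which the limit holds.
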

\begin{proof}
First of all,  the cumulative transfer matrix of an $N$-layer thin film is obtained as
\[
u_N = \begin{cases}
\begin{bmatrix}
0 & \frac{\ii^N}{ {(\hat a_H^\lambda)}^{ \lceil N/2 \rceil } {(\hat a_L^\lambda)}^{ \lfloor N/2 \rfloor } } \\
\ii^N {(\hat a_H^\lambda)}^{ \lceil N/2 \rceil } {(\hat a_L^\lambda)}^{ \lfloor N/2 \rfloor } & 0 
\end{bmatrix} & \text{if $N$ is odd} \\
\\
\begin{bmatrix}
 \frac{\ii^N}{ {(\hat a_H^\lambda)}^{ N/2 } {(\hat a_L^\lambda)}^{ N/2 } } & 0 \\
0 &\ii^N {{(\hat a_H^\lambda)}^{  N/2  } {(\hat a_L^\lambda)}^{  N/2  }}
\end{bmatrix} & \text{if $N$ is even}
\end{cases}.
\]
Then, we have
\[
D_s^\lambda(\tilde u_N) = \begin{cases}
\frac{|\hat a_s^\lambda|^2}{ \left({(\hat a_H^\lambda)}^{ \lceil N/2 \rceil } {(\hat a_L^\lambda)}^{ \lfloor N/2 \rfloor }\right)^2 } +  { \left({(\hat a_H^\lambda)}^{ \lceil N/2 \rceil } {(\hat a_L^\lambda)}^{ \lfloor N/2 \rfloor }\right)^2 } + 2\Re({\hat a_s^\lambda})  & \text{if $N$ is odd} \\
\frac{|\hat a_s^\lambda|^2}{ \left({(\hat a_H^\lambda)}^{ N/2 } {(\hat a_L^\lambda)}^{ N/2 }\right)^2 } +  { \left({(\hat a_H^\lambda)}^{ N/2 } {(\hat a_L^\lambda)}^{ N/2 }\right)^2 } + 2\Re({\hat a_s^\lambda})  & \text{if $N$ is even} 
\end{cases},
\]
where $D_s^\lambda(\cdot) $ is defined as in \eqref{eq:defDen}. Note that since $\hat a_H^\lambda > 1$ and  $\hat a_L^\lambda>1$, we have that $ \lim_{ N \to \infty} D_s^\lambda(\tilde u_N)  = \infty$. Hence, we conclude that $\lim_{N \to \infty} R_s^\lambda (\tilde u_N) = 1$ due to Proposition \ref{prop:reflactanceSimplification}(ii).
\end{proof}

Proposition \ref{prop:thinFilmHeur} justifies the use of the heuristic approach introduced above, especially when a large number of layers are allowed to be used. However, in practice,  thin films with a small number of layers can be preferred due to cost considerations and manufacturing challenges. 
In such cases, the heuristic solutions obtained may not be optimal, as demonstrated by our computational experiments presented in the next section, and our optimization-based approach might prove very useful.


\subsection{Computations}
\label{sec:thinFilmsComp}

In this section,  we present our extensive computations  for the multi-layer thin films problem. In this analysis, we use the coating materials  \ch{SiO2}, \ch{TiO2},  \ch{MgF2}, \ch{Al2O3}, and metallic substrates Tungsten, Tantalum, Molybdenum, Niobium. 
The necessary refractive  index data is obtained from \citet{keccebacs2018enhancing}, which is gathered from multiple sources \citep{malitson1965interspecimen,palik1998handbook,dodge1984refractive,dodge2refractive,golovashkin1969optical}. We utilize BARON 19.12.7 and Gurobi 9 to solve the nonconvex MIQCQP~\eqref{eq:singleWaveF} on a 64-bit personal computer with Intel Core i7
CPU 2.60GHz processor (16 GB RAM). The relative optimality gap is set to  0.001 for all experiments.

\subsubsection{Computational Efficiency}
\label{sec:thinFilmsCompEff}

We first carry out some preliminary experiments to decide between two competing solvers BARON and Gurobi to solve
 problem~\eqref{eq:singleWaveF}, and to demonstrate the effect of valid bilinear equalities~\eqref{eq:validQuad} and symmetry breaking constraints~\eqref{eq:symmBreak}. Our results  presented in Table~\ref{tab:thinFilmsCompEff} clearly show that Gurobi is the faster solver for this problem by at least one order-of-magnitude. We also observe that the addition of constraints~\eqref{eq:validQuad} and~\eqref{eq:symmBreak} help improve the computational performance of both solvers significantly (except $N=2$ for BARON).

\begin{table}[H]\small
\caption{Computational times (in seconds) of different methods to solve problem~\eqref{eq:singleWaveF} for different wavelengths $\lambda$ (in nanometers) and number of layers $N$ on a Tungsten substrate. ``Enh.'' stands for ``Enhanced'' and refers to the addition of 
equations~\eqref{eq:validQuad} and~\eqref{eq:symmBreak}.}\label{tab:thinFilmsCompEff}
\centering
\begin{tabular}{c|rr|rr|rrr|rrr}
   & \multicolumn{ 2}{c|}{BARON} & \multicolumn{ 2}{c|}{BARON Enh.} &          \multicolumn{ 3}{c|}{Gurobi} &         \multicolumn{ 3}{c}{Gurobi Enh.} \\
$\lambda$ (nm) &       $ N=2$ &        $N=3 $&      $  N=2$ &      $  N=3$ &      $  N=2$ &        $N=3$ &       $ N=4$ &      $  N=2$ &       $ N=3$ &      $  N=4$ \\
\hline
       450 &       0.92 &      84.26 &       9.86 &      82.03 &       0.39 &       5.36 &      33.05 &       0.85 &       3.92 &      14.41 \\

       600 &       1.03 &     152.28 &       1.24 &     154.89 &       0.45 &       7.96 &      58.79 &       0.45 &       4.73 &      24.16 \\

       750 &       1.10 &     225.27 &       5.59 &     187.77 &       0.43 &      11.05 &      68.53 &       0.60 &       5.50 &      26.56 \\

       900 &       1.03 &     221.17 &       3.92 &     181.93 &       0.54 &       9.47 &      81.15 &       0.55 &       5.03 &      29.75 \\

      1200 &       0.98 &     245.74 &       2.75 &     133.05 &       0.44 &       9.30 &      73.91 &       0.58 &       4.82 &      31.41 \\

      1500 &       0.94 &     217.62 &       4.49 &     182.65 &       0.42 &       9.17 &      86.32 &       0.47 &       5.55 &      29.62 \\

      1800 &       0.98 &     283.27 &       7.56 &     225.14 &       0.48 &       8.81 &      78.24 &       0.44 &       5.80 &      33.36 \\

      2100 &       0.74 &     314.47 &       4.10 &     144.45 &       0.40 &      10.61 &      88.68 &       0.59 &       6.53 &      32.16 \\

      2400 &       0.76 &     449.25 &       5.40 &     180.88 &       0.42 &      13.08 &      87.57 &       0.59 &       6.68 &      36.68 \\
\hline
   Avg. &       0.94 &     243.70 &       4.99 &     163.64 &       0.44 &       9.42 &      72.92 &       0.57 &       5.40 &      28.68 \\

\end{tabular}  
\end{table}

As a result of these preliminary experiments, we have decided to use Gurobi with enhancements in the detailed experiments presented below.

\subsubsection{Comparison with the Heuristic Approach}
\label{sec:thinFilmsCompHeur}

We now compare the solutions obtained from the heuristic approach in Section \ref{sec:thinFilmsHeur} and solving the optimization problem \eqref{eq:singleWaveF}. In Tables \ref{tab:compareTungsten}--\ref{tab:compareNiobium}, we report the reflectance values obtained for different wavelengths $\lambda$ and  number of layers $N$ on four metallic substrates.
In these computational experiments, we pre-terminate Gurobi once a feasible solution with a reflectance  of at least 0.995 is obtained (this is enforced via the use of the parameter {\texttt BestObjStop}).

\begin{table}[H]\scriptsize
\caption{Comparison of the reflectance values obtained by problem \eqref{eq:singleWaveF} and the heuristic approach for different wavelengths $\lambda$ and  number of layers $N$ on a Tungsten substrate (``AT (s)'' stands for ``average time in seconds'').}\label{tab:compareTungsten}
\centering
\begin{tabular}{cr|rrrrrr|rrrrr}

   &            &                                              \multicolumn{ 6}{c|}{Heuristic} &                                   \multicolumn{ 5}{c}{Optimal} \\

    $\lambda$ (nm) &        $N=0$ &        $N=1$ &        $N=2$ &        $N=3$ &        $N=4$ &        $N=5$ &        $N=6$ &        $N=1$ &        $N=2$ &        $N=3$ &        $N=4$ &        $N=5$ \\
\hline
       450 &      0.470 &      0.279 &      0.865 &      0.778 &      0.973 &      0.953 &      0.995 &      0.553 &      0.870 &      0.894 &      0.974 &      0.979 \\

       600 &      0.508 &      0.209 &      0.857 &      0.683 &      0.966 &      0.917 &      0.992 &      0.563 &      0.862 &      0.879 &      0.967 &      0.971 \\

       750 &      0.500 &      0.169 &      0.846 &      0.633 &      0.961 &      0.896 &      0.990 &      0.545 &      0.851 &      0.866 &      0.962 &      0.966 \\

       900 &      0.521 &      0.223 &      0.850 &      0.661 &      0.961 &      0.903 &      0.990 &      0.579 &      0.856 &      0.875 &      0.962 &      0.968 \\

      1200 &      0.642 &      0.283 &      0.892 &      0.660 &      0.972 &      0.899 &      0.993 &      0.683 &      0.897 &      0.909 &      0.973 &      0.976 \\

      1500 &      0.698 &      0.384 &      0.910 &      0.718 &      0.976 &      0.917 &      0.994 &      0.740 &      0.914 &      0.926 &      0.977 &      0.980 \\

      1800 &      0.866 &      0.616 &      0.962 &      0.805 &      0.990 &      0.942 &      0.997 &      0.881 &      0.964 &      0.967 &      0.990 &      0.991 \\

      2100 &      0.933 &      0.751 &      0.981 &      0.844 &      0.995 &      0.951 &      0.999 &      0.938 &      0.982 &      0.983 &      0.995 &    0.995         \\

      2400 &      0.951 &      0.787 &      0.986 &      0.831 &      0.996 &      0.942 &      0.999 &      0.953 &      0.986 &      0.987 &      0.996 &     0.996        \\
\hline
AT (s) &            &            &            &            &            &            &            &       0.28 &       0.57 &       5.40 &      23.85 &    5517.26      \\

\end{tabular}  
\end{table}

\begin{table}[H]\scriptsize
\caption{Comparison of the reflectance values obtained by problem \eqref{eq:singleWaveF} and the heuristic approach for different wavelengths $\lambda$ and  number of layers $N$ on a Tantalum substrate.}\label{tab:compareTantalum}
\centering
\begin{tabular}{cr|rrrrrr|rrrrr}

   &            &                                              \multicolumn{ 6}{c|}{Heuristic} &                                   \multicolumn{ 5}{c}{Optimal} \\

    $\lambda$ (nm) &        $N=0$ &        $N=1$ &        $N=2$ &        $N=3$ &        $N=4$ &        $N=5$ &        $N=6$ &        $N=1$ &        $N=2$ &        $N=3$ &        $N=4$ &        $N=5$ \\
\hline
       450 &      0.409 &      0.329 &      0.842 &      0.805 &      0.968 &      0.960 &      0.994 &      0.530 &      0.850 &      0.887 &      0.969 &  0.977          \\

       600 &      0.361 &      0.397 &      0.787 &      0.807 &      0.947 &      0.953 &      0.988 &      0.548 &      0.809 &      0.874 &      0.953 & 0.970           \\

       750 &      0.672 &      0.592 &      0.903 &      0.866 &      0.976 &      0.966 &      0.994 &      0.772 &      0.915 &      0.940 &      0.979 &   0.985         \\

       900 &      0.814 &      0.663 &      0.948 &      0.878 &      0.987 &      0.968 &      0.997 &      0.856 &      0.953 &      0.962 &      0.988 &    0.991        \\

      1200 &      0.914 &      0.751 &      0.977 &      0.889 &      0.994 &      0.970 &      0.999 &      0.925 &      0.978 &      0.981 &      0.994 &   0.995         \\

      1500 &      0.951 &      0.813 &      0.987 &      0.895 &      0.997 &      0.970 &      0.999 &      0.955 &      0.987 &      0.988 &      0.996 &   0.995         \\

      1800 &      0.963 &      0.835 &      0.990 &      0.882 &      0.997 &      0.963 &      0.999 &      0.965 &      0.990 &      0.991 &      0.997 &   0.996         \\

      2100 &      0.970 &      0.851 &      0.992 &      0.866 &      0.998 &      0.955 &      0.999 &      0.971 &      0.992 &      0.992 &      0.998 &   0.995         \\

      2400 &      0.973 &      0.860 &      0.992 &      0.848 &      0.998 &      0.943 &      0.999 &      0.974 &      0.992 &      0.993 &      0.997 &   0.996         \\
\hline
AT (s) &            &            &            &            &            &            &            &       0.28 &       0.51 &       5.19 &      16.74 &  1478.59          \\

\end{tabular}  
\end{table}

\begin{table}[H]\scriptsize
\caption{Comparison of the reflectance values obtained by problem \eqref{eq:singleWaveF} and the heuristic approach for different wavelengths $\lambda$ and  number of layers $N$ on a Molybdenum substrate.}\label{tab:compareMolybdenum}
\centering
\begin{tabular}{cr|rrrrrr|rrrrr}

   &            &                                              \multicolumn{ 6}{c|}{Heuristic} &                                   \multicolumn{ 5}{c}{Optimal} \\

    $\lambda$ (nm) &        $N=0$ &        $N=1$ &        $N=2$ &        $N=3$ &        $N=4$ &        $N=5$ &        $N=6$ &        $N=1$ &        $N=2$ &        $N=3$ &        $N=4$ &        $N=5$ \\
\hline
       450 &      0.569 &      0.325 &      0.896 &      0.791 &      0.979 &      0.956 &      0.996 &      0.643 &      0.901 &      0.920 &      0.980 &   0.984         \\

       600 &      0.567 &      0.218 &      0.878 &      0.676 &      0.971 &      0.915 &      0.993 &      0.613 &      0.882 &      0.896 &      0.972 &   0.976         \\

       750 &      0.566 &      0.191 &      0.872 &      0.631 &      0.968 &      0.895 &      0.992 &      0.607 &      0.875 &      0.888 &      0.969 &      0.972      \\

       900 &      0.570 &      0.261 &      0.868 &      0.677 &      0.966 &      0.908 &      0.991 &      0.626 &      0.874 &      0.892 &      0.967 &     0.972       \\

      1200 &      0.786 &      0.492 &      0.939 &      0.768 &      0.984 &      0.934 &      0.996 &      0.814 &      0.942 &      0.949 &      0.985 &      0.987      \\

      1500 &      0.890 &      0.638 &      0.970 &      0.806 &      0.992 &      0.943 &      0.998 &      0.900 &      0.971 &      0.973 &      0.993 &      0.993      \\

      1800 &      0.935 &      0.735 &      0.982 &      0.824 &      0.995 &      0.945 &      0.999 &      0.939 &      0.983 &      0.984 &      0.995 &      0.995      \\

      2100 &      0.958 &      0.804 &      0.988 &      0.837 &      0.997 &      0.946 &      0.999 &      0.960 &      0.988 &      0.989 &      0.995 &     0.995       \\

      2400 &      0.969 &      0.844 &      0.991 &      0.840 &      0.998 &      0.941 &      0.999 &      0.970 &      0.991 &      0.992 &      0.997 &      0.995      \\
\hline
AT (s) &            &            &            &            &            &            &            &       0.25 &       0.44 &       5.51 &      18.64 &    5717.69        \\

\end{tabular}  
\end{table}

\begin{table}[H]\scriptsize
\caption{Comparison of the reflectance values obtained by problem \eqref{eq:singleWaveF} and the heuristic approach for different wavelengths $\lambda$ and  number of layers $N$ on a Niobium substrate.}\label{tab:compareNiobium}
\centering
\begin{tabular}{cr|rrrrrr|rrrrr}

   &            &                                              \multicolumn{ 6}{c|}{Heuristic} &                                   \multicolumn{ 5}{c}{Optimal} \\

    $\lambda$ (nm) &        $N=0$ &        $N=1$ &        $N=2$ &        $N=3$ &        $N=4$ &        $N=5$ &        $N=6$ &        $N=1$ &        $N=2$ &        $N=3$ &        $N=4$ &        $N=5$ \\
\hline
       450 &      0.558 &      0.486 &      0.890 &      0.862 &      0.978 &      0.972 &      0.996 &      0.688 &      0.900 &      0.931 &      0.980 &   0.987         \\

       600 &      0.573 &      0.387 &      0.878 &      0.785 &      0.971 &      0.947 &      0.993 &      0.663 &      0.887 &      0.912 &      0.973 &   0.979         \\

       750 &      0.620 &      0.407 &      0.888 &      0.775 &      0.972 &      0.940 &      0.993 &      0.696 &      0.896 &      0.917 &      0.974 &     0.979       \\

       900 &      0.726 &      0.485 &      0.922 &      0.794 &      0.980 &      0.944 &      0.995 &      0.775 &      0.927 &      0.939 &      0.981 &     0.985       \\

      1200 &      0.875 &      0.641 &      0.966 &      0.831 &      0.991 &      0.952 &      0.998 &      0.890 &      0.967 &      0.971 &      0.992 &    0.993        \\

      1500 &      0.924 &      0.717 &      0.980 &      0.836 &      0.995 &      0.952 &      0.999 &      0.930 &      0.980 &      0.982 &      0.995 &    0.995        \\

      1800 &      0.941 &      0.739 &      0.984 &      0.805 &      0.996 &      0.938 &      0.999 &      0.944 &      0.984 &      0.985 &      0.995 &     0.995       \\

      2100 &      0.953 &      0.775 &      0.987 &      0.792 &      0.997 &      0.927 &      0.999 &      0.955 &      0.987 &      0.988 &      0.996 &    0.996        \\

      2400 &      0.952 &      0.760 &      0.986 &      0.733 &      0.996 &      0.894 &      0.999 &      0.953 &      0.986 &      0.987 &      0.996 &    0.996        \\
\hline
AT (s) &            &            &            &            &            &            &            &       0.25 &       0.49 &       5.18 &      18.75 &   2297.35        \\

\end{tabular}  
\end{table}

%
%
%
%
%
%

We have similar observations for different metallic substrates. Firstly, we report that the success of the heuristic method heavily depends on the parity of the number of layers $N$. If $N$ is even, then the solution of the heuristic method performs quite well compared to the optimal solutions. However, if $N$ is odd, then the performance of the heuristic solution can be significantly worse than the optimal solution. Interestingly, in the case $N$ is odd, the heuristic solution obtained with $N-1$ layers seems to be even better. Also, the reflectance value difference between the heuristic and optimal solutions gets smaller as $N$ increases as expected due to Proposition~\ref{prop:thinFilmHeur}. Therefore, our proposed optimization approach is especially useful when $N$ is odd or small. 
Since thin films with smaller number of layers providing high reflectance are desirable in practice, our approach can be beneficial in such cases. 
We remark that we have not solved the optimization model for $N \ge 6$ since the heuristic method already provides very high reflectance values, which are sufficient from an application point of view when $N=6$.

We note that optimal solutions have similar characteristic to the solutions obtained by the heuristic method. In particular, the ordering of materials are again alternating between the highest and lowest indexed coating materials (in this case, \ch{TiO2} and \ch{MgF2}, respectively). However, the optimal optical thicknesses of the layers are not necessarily equal to quarter wavelength. We leave the formal analysis and exploration of these observations as future work. 
Finally, we notice that Gurobi is quite successful in finding high quality solutions early, and spends most of the effort in improving the dual bound to certify that the feasible solution obtained is, in fact, optimal. 




%
%
%
%
%

\section{An Application from Biology: Antibiotics Time Machine}
\label{sec:antibiotic}{
In this section, we  study the antibiotics time machine problem from biology. After providing a formal problem definition in Section \ref{sec:antibioticDef}, we present an MILP formulation in Section \ref{sec:antibioticForm} by utilizing the linearization approach derived in Section \ref{sec:mainLinear}. Finally, we present the results of our computational experiments in Section \ref{sec:antibioticComp} using real and synthetic datasets. 
}

\subsection{Problem Definition}
\label{sec:antibioticDef}

Let us first fix the notation used in this section. 
Consider a string  $s$ of size $g$ with $s_i\in\{0,1\}$, $i=1,\dots,g$. Here, each  string $s \in \{0,1\}^g$  represents a bacterial genotype  with $g$  alleles, and each character $s_i$  indicates whether there is a mutation in the corresponding allele ($s_i=1$) or not ($s_i=0$). The genotype $s=0$ is special and it is called the \textit{wild type} since it has no mutations.
Let us denote the total number of states as $d:=2^g$.
Suppose that we have $K$ antibiotics and the transition between genotypes (or states) under the administration of antibiotic $k$ is governed by a probability matrix $\hat T_k \in \mathbb{R}^{d \times d}$, $k=1,\dots,K$. In other words, there is an associated Markov chain for each drug.  Under the common assumption of Strong Selection Weak Mutation (SSWM) developed in \citet{gillespie1983simple, gillespie1984molecular}, only the transition probability between two genotypes $s$ and $s'$ that have exactly one different character (i.e., $\|s-s'\|_1 = 1$) can be  positive. Let us call the set of such pairs as $\cN$.

The antibiotics time machine problem is formally described as follows: Given $K$ drugs and the initial genotype, find a treatment plan of length $N$ such that the probability of reaching the wild type is maximized.  

Let us use Figure  \ref{fig:antibiotic} as an example to explain the notation and clarify the problem setting. In this illustration, we have $g=3$ alleles and $K=2$ drugs, Blue (solid arcs) and Red (dotted arcs). Suppose that we are seeking  a treatment plan of length $N=3$ given the initial genotype $111$.
 Notice that there is no path with positive probability in the Markov chain corresponding to a single drug going from the initial genotype to the wild type in three steps. However, administering Blue, Blue, Red or 
Red, Blue, Red drugs in sequence both provide positive probabilities. Then, our aim is to decide which of these treatment plans have the highest probability (observe that these are the only treatment plans with positive probabilities for this instance), which involves a series of matrix multiplications.  
We would like to note that the antibiotics time machine problem can be seen as solving a ``static'' Markov decision process in which all the decisions are made before any realizations become available. 

%
%
%
%
%
%
%

\def\layersep{4.5cm}
\def\nodesep{3.5}

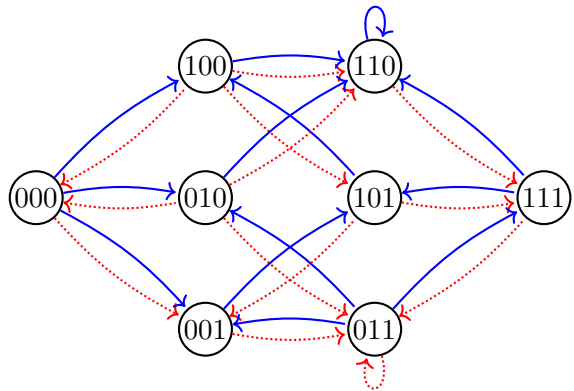
\begin{figure}[H]
\centering
\begin{tikzpicture}[shorten >=1pt,->,draw=black!50, node distance=\layersep, thick, scale=0.5]
    \tikzstyle{every pin edge}=[<-,shorten <=1pt]
    \tikzstyle{neuron}=[circle,fill=black!75,minimum size=20pt,draw=black,inner sep=0pt]
    \tikzstyle{input neuron}=[neuron, fill=green!0];
    \tikzstyle{output neuron}=[neuron, fill=red!0, minimum size=16pt,draw=white];
    \tikzstyle{hidden neuron}=[neuron, fill=blue!50];
    \tikzstyle{annot} = [text width=4em, text centered]

	\node[input neuron] (I000) at (0,0) {{$000$}};

        
        	\node[input neuron] (I100) at (\layersep,\nodesep) {{$100$}};
        	\node[input neuron] (I010) at (\layersep,0) {{$010$}};
        	\node[input neuron] (I001) at (\layersep,-\nodesep) {{$001$}};
        

%
            
        	\node[input neuron] (I110) at (2*\layersep,\nodesep) {{$110$}};
        	\node[input neuron] (I101) at (2*\layersep,0) {{$101$}};
        	\node[input neuron] (I011) at (2*\layersep,-\nodesep) {{$011$}};


%
%
%
%
%
%
%
%
            
	\node[input neuron] (I111) at (3*\layersep,0) {{$111$}};

%


%

    \path [<-] (I000) edge [bend right=10,red,densely dotted] (I100);
    \path [<-] (I000) edge [bend right=10,red,densely dotted] (I010);
    \path [->] (I000) edge [bend right=10,red,densely dotted] (I001);
    \path [->] (I000) edge [bend left=10,blue] (I100);
    \path [->] (I000) edge [bend left=10,blue] (I010);
    \path [->] (I000) edge [bend left=10,blue] (I001);   
    
    \path [->] (I100) edge [bend right=10,red,densely dotted] (I110);
    \path [->] (I100) edge [bend right=10,red,densely dotted] (I101);
    \path [->] (I100) edge [bend left=10,blue] (I110);
    \path [<-] (I100) edge [bend left=10,blue] (I101);
    
    \path [->] (I010) edge [bend right=10,red,densely dotted] (I110);   
    \path [->] (I010) edge [bend right=10,red,densely dotted] (I011);    
    \path [->] (I010) edge [bend left=10,blue] (I110);   
    \path [<-] (I010) edge [bend left=10,blue] (I011);    

    \path [<-] (I001) edge [bend right=10,red,densely dotted] (I101);
    \path [->] (I001) edge [bend right=10,red,densely dotted] (I011);
    \path [->] (I001) edge [bend left=10,blue] (I101);
    \path [<-] (I001) edge [bend left=10,blue] (I011);
        
    \path [->] (I110) edge [bend right=10,red,densely dotted] (I111);    
    \path [<-] (I110) edge [bend left=10,blue] (I111);    
        \path [<-] (I110) edge [loop above,blue] (I110);    
        
    \path [->] (I101) edge [bend right=10,red,densely dotted] (I111);
    \path [<-] (I101) edge [bend left=10,blue] (I111);
    
    \path [<-] (I011) edge [bend right=10,red,densely dotted] (I111);
        \path [<-] (I011) edge [loop below,red,densely dotted] (I011);
    \path [->] (I011) edge [bend left=10,blue] (I111);
                
\end{tikzpicture}
\caption{Illustration of an antibiotics time machine problem instance with $g=3$ alleles and $K=2$ drugs. Probabilistic state transitions under each drug are represented by the arcs.}\label{fig:antibiotic}
\end{figure}

\subsection{Problem Formulation}
\label{sec:antibioticForm}

We will now formulate antibiotics time machine problem as an instance of the generic model~\eqref{eq:MILP} by benefiting from the fact that $\cT$ is a finite set.
Recall that we denote the probability transition matrix of drug $k$ as $\hat T_k \in \mathbb{R}^{d \times d}$. Then, we have 
$
\cT = \{\hat T_k: k =1,\dots,K \}
$.
Let $p$ and $q$ be the unit row vectors corresponding to the initial and final states, respectively. Then, the objective function $f(w):=w q^T$ gives the probability of reaching to the final state in exactly $N$ steps with the decisions $T_1, \dots, T_N$. Since the objective function is linear, antibiotics time machine problem can be solved as an MILP via~\eqref{eq:MILP}. In this formulation,  we  select the outer-approximating polytopes $\bar\cU_n$ as the  standard simplex of order $d$, that is,  
\[
\bar\cU_n = \Delta_d := \bigg \{ u \in \mathbb{R}_+^{d} : \ \sum_{j=1}^d u_{j} = 1  \bigg\},
\]
for $n=1,\dots,N$, as each variable row vector $u_n$  corresponds to the probability distribution after administering the selected first~$n$ drugs.

\subsection{Computations}
\label{sec:antibioticComp}

In this section, we present the computational results obtained by solving the antibiotics time machine problem using a real dataset from  \citet{mira2015rational} in Section~\ref{sec:antibioticCompReal} and a synthetic dataset in Section~\ref{sec:antibioticCompSynthetic}. We compare the computational effort of complete enumeration and  the MILP~\eqref{eq:MILP}, and discuss their scalability issues.


\subsubsection{A Real Dataset}
\label{sec:antibioticCompReal}

We use the experimental growth data and probability calculations from  \citet{mira2015rational}  to obtain the transition probability matrices. 
In particular, let $\omega_{k,j}$ be the  growth rate of genotype $j$ under antibiotic $k$. The main principle behind the probability calculation   is that if $\omega_{k,j'} > \omega_{k,j}$, then $\hat T_{k,(j,j')} > 0 $ for $(j,j') \in \cN$. 
 Two different probability models are used in  \citet{mira2015rational}.
\begin{itemize}
\item
Correlated Probability Model (CPM):  In this model, the probabilities are computed as
\begin{equation*}\label{eq:probCPM}
\hat T^{\text{\tiny CPM}}_{k,(j,j')} = \frac{ \max\{0, \omega_{k,j'} - \omega_{k,j} \} }{ \sum_{ j'':(j'',j) \in \cN}  \max\{0, \omega_{k,j''} - \omega_{k,j} \} }, \quad (j,j') \in\cN.
\end{equation*}
Here, we use the convention $\frac{0}{0}=0$.

\item
Equal Probability Model (EPM): In this model, the probabilities are computed as
\begin{equation*}\label{eq:probEPM}
\hat T^{\text{\tiny EPM}}_{k,(j,j')} = \frac{ \mathbbm{1} ( \omega_{k,j'} > \omega_{k,j} ) }{ \sum_{ j'':(j'',j) \in \cN}  \mathbbm{1} ( \omega_{k,j''} > \omega_{k,j}) }, \quad (j,j') \in\cN.
\end{equation*}
Here, $\mathbbm{1}(\cdot)$ denotes the indicator function.
\end{itemize}
In both models, we set $\hat T_{k,(j,j)} = 1$ if genotype $j$ is an absorbing state under drug $k$, that is, $ \omega_{k,j} > \omega_{k,j'}$ for each $j'$ such that $(j,j') \in\cN$.
In this real dataset, the measurements of  $K=15$ drugs are  reported for genotypes with $g=4$ alleles, that is, for $d=16$ states.

We now compare the computational performance of solving the antibiotics time machine problem with complete enumeration (which is the method used in \citet{mira2015rational} for up to $N=6$) versus MILP~\eqref{eq:MILP}  in 
Tables~\ref{tab:MiraCPM} and \ref{tab:MiraEPM}. 
 In these tables, we report i) the maximum probability of going from each initial genotype to the wild type (0000), ii)  the average computation time  and the number of branch-and-bound nodes (BBNode)  under the absolute optimality gap of 0.001, and iii) the average computation time of complete enumeration (Enum) up to $N=6$ and its estimates for larger values. 
We use 
Gurobi 9 as the MILP solver on a 64-bit personal computer with Intel Core i7
CPU 2.60GHz processor (16 GB RAM).

\begin{landscape}
\begin{table}[H]\scriptsize
\caption{Maximum probabilities, average number of branch-and-bound nodes and  average run times (in seconds) using the data from \citet{mira2015rational}  under  CPM.}\label{tab:MiraCPM}
\centering
\begin{tabular}{c|rrrrrr|rrrrrrrrr}

 initial          &          $N=1$ &        $N=2$ &        $N=3$ &        $N=4$ &        $N=5$ &        $N=6$ &        $N=7$ &        $N=8$ &        $N=9$ &       $N=10$      & $N=11$ &       $N=12$ &  $N=13$ &        $N=14$ &        $N=15$  \\
\hline
1000 &      1.000 &      1.000 &      1.000 &      1.000 &      1.000 &      1.000 &      1.000 &      1.000 &      1.000 &      1.000 &      1.000 &      1.000 &     1.000 &      1.000 &      1.000 \\

0100 &      0.617 &      0.617 &      0.617 &      0.617 &      0.617 &      0.617 &      0.617 &      0.617 &      0.617 &      0.617 &      0.617 &      0.617   &   0.617 &      0.617 &      0.617 \\

0010 &      0.715 &      0.715 &      0.715 &      0.715 &      0.715 &      0.715 &      0.715 &      0.715 &      0.715 &      0.715 &      0.715 &      0.715     & 0.715 &      0.715 &      0.715 \\

0001 &      0.287 &      0.287 &      0.592 &      0.592 &      0.726 &      0.726 &      0.729 &      0.729 &      0.729 &      0.729 &      0.731 &      0.731 &     0.732 &      0.732 &      0.733 \\

1100 &      0.000 &      0.617 &      0.617 &      0.617 &      0.617 &      0.617 &      0.617 &      0.617 &      0.617 &      0.617 &      0.617 &      0.617 &     0.617 &      0.617 &      0.617 \\

1010 &      0.000 &      0.715 &      0.715 &      0.715 &      0.715 &      0.715 &      0.715 &      0.715 &      0.715 &      0.715 &      0.715 &      0.715  &    0.715 &      0.715 &      0.715 \\

1001 &      0.000 &      0.559 &      0.559 &      0.726 &      0.726 &      0.729 &      0.729 &      0.729 &      0.729 &      0.731 &      0.731 &      0.732 &     0.732 &      0.733 &      0.733 \\

0110 &      0.000 &      0.617 &      0.617 &      0.617 &      0.617 &      0.617 &      0.617 &      0.617 &      0.617 &      0.617 &      0.617 &      0.617  &    0.617 &      0.617 &      0.617 \\

0101 &      0.000 &      0.592 &      0.592 &      0.612 &      0.612 &      0.617 &      0.617 &      0.617 &      0.617 &      0.617 &      0.617 &      0.617   &   0.617 &      0.617 &      0.617 \\

0011 &      0.000 &      0.361 &      0.361 &      0.586 &      0.600 &      0.617 &      0.617 &      0.617 &      0.617 &      0.617 &      0.617 &      0.617  &    0.617 &      0.617 &      0.617 \\

1110 &      0.000 &      0.000 &      0.617 &      0.617 &      0.617 &      0.617 &      0.617 &      0.617 &      0.617 &      0.617 &      0.617 &      0.617 &     0.617 &      0.617 &      0.617 \\

1101 &      0.000 &      0.000 &      0.592 &      0.592 &      0.617 &      0.617 &      0.617 &      0.617 &      0.617 &      0.617 &      0.617 &      0.617 &     0.617 &      0.617 &      0.617 \\

1011 &      0.000 &      0.000 &      0.532 &      0.532 &      0.684 &      0.690 &      0.691 &      0.693 &      0.694 &      0.694 &      0.694 &      0.695 &     0.696 &      0.697 &      0.697 \\

0111 &      0.000 &      0.000 &      0.586 &      0.600 &      0.617 &      0.617 &      0.617 &      0.617 &      0.617 &      0.617 &      0.617 &      0.617 &     0.617 &      0.617 &      0.617 \\

1111 &      0.000 &      0.000 &      0.000 &      0.617 &      0.617 &      0.617 &      0.617 &      0.617 &      0.617 &      0.617 &      0.617 &      0.617 &     0.617 &      0.617 &      0.617 \\
\hline
MILP (s) &       0.05 &       0.07 &       0.10 &       0.12 &       0.19 &       0.41 &       0.63 &       1.01 &       1.42 &       2.01 &       4.30 &      10.21 &     23.02 &      48.67 &     147.29 \\

BBNode &       0.00 &       0.27 &       0.87 &       1.73 &       8.53 &      26.53 &      84.33 &     252.53 &     602.47 &    1385.53 &    3628.40 &    8093.60 &  21219.20 &   42344.73 &   97275.80 \\

Enum (s) &       0.00 &       0.01 &       0.06 &       0.94 &      16.46 &     272.12 &   $4.1 \cdot 10^3$ &   $6.1\cdot10^4$ &  
$ 9.2 \cdot10^5$  &   $1.4\cdot10^7$ & $ 2.1\cdot10^8$  &   $3.1\cdot10^9$ &   $4.7\cdot10^{10}$ & $ 7.0\cdot10^{11}$  &   $1.1\cdot10^{13}$    \\

\end{tabular}  
\end{table}

\begin{table}[H]\scriptsize
\caption{Maximum probabilities, average number of branch-and-bound nodes and  average run times (in seconds) using the data from \citet{mira2015rational}  under  EPM.}\label{tab:MiraEPM}
\centering
\begin{tabular}{c|rrrrrr|rrrrrrrrr}

 initial          &          $N=1$ &        $N=2$ &        $N=3$ &        $N=4$ &        $N=5$ &        $N=6$ &        $N=7$ &        $N=8$ &        $N=9$ &       $N=10$      & $N=11$ &       $N=12$ &  $N=13$ &        $N=14$ &        $N=15$  \\
\hline
1000 &      1.000 &      1.000 &      1.000 &      1.000 &      1.000 &      1.000 &      1.000 &      1.000 &      1.000 &      1.000 &      1.000 &      1.000            &      1.000 &      1.000 &      1.000 \\

0100 &      0.333 &      0.333 &      0.333 &      0.375 &      0.458 &      0.458 &      0.463 &      0.463 &      0.471 &      0.479 &      0.479 &      0.515            &      0.515 &      0.520 &      0.520 \\

0010 &      0.500 &      0.500 &      0.500 &      0.500 &      0.500 &      0.500 &      0.512 &      0.512 &      0.515 &      0.516 &      0.520 &      0.520            &      0.526 &      0.526 &      0.532 \\

0001 &      0.500 &      0.500 &      0.667 &      0.667 &      0.667 &      0.667 &      0.690 &      0.690 &      0.693 &      0.693 &      0.696 &      0.696            &      0.700 &      0.700 &      0.704 \\

1100 &      0.000 &      0.333 &      0.333 &      0.389 &      0.389 &      0.458 &      0.458 &      0.463 &      0.463 &      0.471 &      0.479 &      0.479            &      0.515 &      0.515 &      0.520 \\

1010 &      0.000 &      0.500 &      0.500 &      0.583 &      0.583 &      0.587 &      0.587 &      0.591 &      0.591 &      0.596 &      0.596 &      0.601            &      0.601 &      0.606 &      0.606 \\

1001 &      0.000 &      0.667 &      0.667 &      0.667 &      0.667 &      0.690 &      0.690 &      0.693 &      0.693 &      0.696 &      0.696 &      0.700            &      0.700 &      0.704 &      0.704 \\

0110 &      0.000 &      0.333 &      0.333 &      0.333 &      0.375 &      0.458 &      0.458 &      0.463 &      0.463 &      0.471 &      0.479 &      0.479            &      0.515 &      0.515 &      0.520 \\

0101 &      0.000 &      0.292 &      0.375 &      0.458 &      0.458 &      0.463 &      0.463 &      0.471 &      0.479 &      0.479 &      0.515 &      0.515            &      0.520 &      0.520 &      0.526 \\

0011 &      0.000 &      0.250 &      0.250 &      0.500 &      0.500 &      0.500 &      0.502 &      0.531 &      0.539 &      0.553 &      0.553 &      0.557            &      0.557 &      0.562 &      0.562 \\

1110 &      0.000 &      0.000 &      0.333 &      0.333 &      0.333 &      0.375 &      0.458 &      0.458 &      0.463 &      0.463 &      0.471 &      0.479            &      0.479 &      0.515 &      0.515 \\

1101 &      0.000 &      0.000 &      0.292 &      0.375 &      0.458 &      0.458 &      0.463 &      0.463 &      0.471 &      0.479 &      0.479 &      0.515            &      0.515 &      0.520 &      0.520 \\

1011 &      0.000 &      0.000 &      0.333 &      0.333 &      0.389 &      0.417 &      0.458 &      0.458 &      0.475 &      0.475 &      0.481 &      0.481            &      0.487 &      0.515 &      0.515 \\

0111 &      0.000 &      0.000 &      0.148 &      0.198 &      0.333 &      0.375 &      0.458 &      0.458 &      0.463 &      0.463 &      0.471 &      0.479            &      0.479 &      0.515 &      0.515 \\

1111 &      0.000 &      0.000 &      0.000 &      0.333 &      0.375 &      0.458 &      0.458 &      0.463 &      0.463 &      0.471 &      0.479 &      0.479            &      0.515 &      0.515 &      0.520 \\

\hline
MILP (s) &       0.04 &       0.08 &       0.09 &       0.14 &       0.25 &       0.42 &       0.60 &       0.96 &       1.96 &       3.83 &       8.01 &      14.96            &      35.72 &      77.47 &     165.86 \\

BBNode &       0.00 &       0.33 &       0.87 &       4.40 &      43.20 &     168.47 &     448.73 &    1002.93 &    2380.13 &    4682.67 &    9510.47 &   22676.53            &   48543.07 &   76276.20 &  125473.00 \\

Enum (s) &       0.00 &       0.00 &       0.06 &       0.94 &      16.46 &     273.02 &      $4.1 \cdot 10^3$ &   $6.1\cdot10^4$ &  
$ 9.2 \cdot10^5$  &   $1.4\cdot10^7$ & $ 2.1\cdot10^8$  &   $3.1\cdot10^9$ &   $4.7\cdot10^{10}$ & $ 7.0\cdot10^{11}$  &   $1.1\cdot10^{13}$    \\

\end{tabular}  
\end{table}
\end{landscape}

As expected, we clearly observe that the MILP approach is significantly faster than  complete enumeration, especially for larger values of the treatment length~$N$. From an application point of view, this is quite important since complete enumeration only allows for smaller values of~$N$ such as 6 in practice (see e.g. \citet{mira2015rational}) whereas the maximum probabilities might be obtained for longer treatment plans. This is more evident under EPM in which 14 of 15 initial states have higher probability of returning to the wild type in 10 steps compared to 6 steps (the only exception is  genotype 1000, which already has a deterministic path of going to the wild type). We note that even a small increase in maximum probabilities is crucial due to the critical nature of the application. 

\subsubsection{A Synthetic Dataset}
\label{sec:antibioticCompSynthetic}

In this section, we randomly generate growth data to construct the transition probability matrices in order to test the scalability of the proposed approach.
Based on our observation from the real dataset \citep{mira2015rational}, we have come up with a simple growth data generation procedure as follows:
\begin{equation*}\label{eq:randomGrowthGen}
\omega_{k,j} = \begin{cases}
0 & \text{w.p. } 1/3 \\
1 & \text{w.p. } 1/6 \\
2 & \text{w.p. } 1/2 
\end{cases}.
\end{equation*}
The intuition behind the parameters of this trinomial distribution is  that most  of the antibiotics are  effective to prevent the growth of a limited number of genotypes (one-third) whereas the growth of the majority of the genotypes are either unaffected (one-half) or slightly affected (one-sixth). 

%

Once we have the growth data, we construct the probability transition matrices under EPM as described in 
Section~\ref{sec:antibioticCompReal} and solve the MILP~\eqref{eq:MILP} for each initial state.  The average computational times are reported in Table~\ref{tab:RandomEPM}. Considering the size of the largest instance with $d=32$ states and $K=30$ drugs, and the fact that we only use a personal computer, an average computational time of about 7 minutes seems quite promising to demonstrate the scalability of the approach.  

\begin{table}[H]\small
\caption{Average run times in seconds for the randomly generated instances under  EPM.}\label{tab:RandomEPM}
\centering
\begin{tabular}{ccc|rrrrrr}

         $g$ &      $  d$ &        $  K$ &       $ N=5$ &     $   N=6 $&      $  N=7$ &      $  N=8$ &     $   N=9$ &     $  N=10$ \\
\hline

       4    &         16 &         15 &          0.23 &       0.45 &       0.64 &       1.09 &       2.17 &       5.11 \\

         4  &         16 &         20 &          0.46 &       0.67 &       0.96 &       1.85 &       3.92 &       8.67 \\

           4&         16 &         25 &         0.55 &       0.82 &       1.06 &       2.11 &       4.58 &       7.96 \\

         4  &         16 &         30 &        0.64 &       0.95 &       1.33 &       3.04 &       7.53 &      19.45 \\
\hline
         5 &         32 &         15 &         0.48 &       0.80 &       1.54 &       3.29 &       9.78 &      30.72  \\

         5  &         32 &         20 &         0.64 &       1.18 &       2.65 &       7.45 &      22.71 &      94.34 \\

          5 &         32 &         25 &       0.79 &       1.60 &       3.74 &      11.20 &      38.34 &     206.92 \\

          5 &         32 &         30 &     1.05 &       2.20 &       5.31 &      17.15 &      74.55 &     412.64   \\

\end{tabular}  
\end{table}

\section{Conclusions}
\label{sec:conc}

In this paper, we consider a class of optimization problems involving the multiplication of variable matrices to be selected from a family, and analyze such optimization problems depending on the structure of the matrix family. We  focus on the study of two interesting real-life applications:  the multi-layer thin films problem from material science and the antibiotics time machine problem from biology. We obtain compact-size mixed-integer quadratically constrained quadratic programming and mixed-integer linear programming   formulations for these two problems, respectively. Finally, we carried out an extensive computational study comparing the accuracy and efficiency of our proposed approach against heuristics and exhaustive search, which are quite common in the  literature.

We have  future research directions in both material science and biology applications. In this paper, we only focused on optimizing the reflectance of multi-layer thin films at a given wavelength. However, in many practical applications, a design which works well for a \textit{spectrum} of wavelengths is desired. Such an optimization problem can be modeled with an objective function  involving an integral and  infinitely many constraints. Although a finite-size approximate reformulation of this optimization model can be obtained, the resulting model seems to be quite challenging to solve and it likely requires a specialized solution algorithm.

For the biology application, a promising research direction seems to incorporate the uncertainty related to the growth rate measurements. In a recent study \citep{mira2017statistical}, the growth rates of different genotypes are measured 12 times under 23 different  antibiotics and dosages. Although these measurements lead to relatively small confidence intervals for the growth rates, the optimal treatment sequences obtained from each different measurement are quite different from each other. This observation motivates us to use robust optimization or (risk-averse) stochastic programming techniques to incorporate the uncertainty in the growth rate measurements as  a future research direction.

\section*{Acknowledgments}
The author wishes to thank Ali Rana At{\i}lgan for introducing him the problems in this paper and their fruitful discussions. The author also acknowledges Muhammed Ali Ke\c{c}eba\c{s} and K{\"u}r\c{s}at \c{S}endur's help in providing  the input data of the multi-layer thin films problem. 

\bibliography{references}

\end{document}